\documentclass[a4paper, fleqn, oneside]{amsart}
\pdfoutput=1
\usepackage{geometry}
\usepackage[utf8]{inputenc}
\usepackage[T1]{fontenc}
\usepackage[colorlinks]{hyperref}
\usepackage[capitalise]{cleveref}
\usepackage{microtype}
\usepackage{enumitem}
\usepackage{mathrsfs}
\hypersetup{
  linkcolor=[rgb]{0.3,0.3,0.6},
  citecolor=[rgb]{0.2, 0.6, 0.2},
  urlcolor=[rgb]{0.6, 0.2, 0.2}
}

\usepackage{mathtools, amsthm, amsfonts, amssymb}
\allowdisplaybreaks
\usepackage{braket}
\usepackage{commath}

\usepackage{accents}

\theoremstyle{plain}
\newtheorem{theorem}{Theorem}[section]
\newtheorem*{theorem*}{Theorem}
\newtheorem{proposition}[theorem]{Proposition}
\newtheorem{lemma}[theorem]{Lemma}

\newtheorem{corollary}[theorem]{Corollary}
\newtheorem{claim}[theorem]{Claim}

\theoremstyle{definition}

\newtheorem*{problem*}{Problem}
\newtheorem{problem}[theorem]{Problem}
\newtheorem{remark}[theorem]{Remark}
\newtheorem{example}[theorem]{Example}

\newcommand{\slfrac}[2]{\left.#1\middle/#2\right.}

\DeclarePairedDelimiter\floor{\lfloor}{\rfloor}

\DeclareMathOperator{\supp}{supp}

\DeclareMathOperator{\rank}{R}
\DeclareMathOperator{\subrank}{Q}

\DeclareMathAccent{\wtilde}{\mathord}{largesymbols}{"65}
\DeclareMathOperator{\asymprank}{\underaccent{\wtilde}{R}}
\DeclareMathOperator{\asympsubrank}{\underaccent{\wtilde}{Q}}

\newcommand{\FF}{\mathbb{F}}

\newcommand{\QQ}{\mathbb{Q}}
\newcommand{\NN}{\mathbb{N}}

\newcommand{\GHZ}{\mathrm{GHZ}}

\newcommand{\ZZ}{\mathbb{Z}}
\newcommand{\RR}{\mathbb{R}}

\DeclareMathOperator{\type}{type}

\DeclareMathOperator{\Span}{Span}

\newcommand{\defin}[1]{\emph{#1}}

\let\leqx\leqslant

\newcommand{\doasympleqx}{%
  \hbox{\ooalign{%
    \noalign{\kern.25ex}
    $\leqslant$\cr
    \noalign{\kern1.25ex}
    \smash{$\sim$}\cr
  }}%
}

\newcommand{\doasympdomleq}{%
  \hbox{\ooalign{%
    \noalign{\kern.25ex}
    $\preccurlyeq$\cr
    \noalign{\kern1.25ex}
    \smash{$\sim$}\cr
  }}%
}

\newcommand{\doasympasympdomleq}{%
  \hbox{\ooalign{%
    \noalign{\kern.25ex}
    $\preccurlyeq$\cr
    \noalign{\kern1.25ex}
    \smash{$\sim$}\cr
    \noalign{\kern0.5ex}
    \smash{$\sim$}\cr
  }}%
}

\DeclareMathOperator{\slicerank}{slicerank}

\let\leqx\leqslant

\newcommand{\doasympgeqx}{%
  \hbox{\ooalign{%
    \noalign{\kern.25ex}
    $\geqslant$\cr
    \noalign{\kern1.25ex}
    \smash{$\sim$}\cr
  }}%
}

\newcommand{\doasympasympleqx}{%
  \hbox{\ooalign{%
    \noalign{\kern.25ex}
    $\leqx$\cr
    \noalign{\kern1.25ex}
    \smash{$\sim$}\cr
    \noalign{\kern0.5ex}
    \smash{$\sim$}\cr
  }}%
}

\newcommand{\Vperpt}{\abs[1]{(V^\perp)_t}}

\newcommand{\fnc}{\frac{16c^2}{n^2} + \biggl(\frac{e\ln(2)c}{n}\biggr)^{\ln(2)c}}

\title[Asymptotic induced matching number of hypergraphs]{The asymptotic induced matching number\\ of hypergraphs: balanced binary strings}

\author{Srinivasan Arunachalam}
\address{Center for Theoretical Physics, Massachusetts Institute of Technology,
77 Massachusetts Ave, 6-304, 
Cambridge, MA 02139, USA }
\email{arunacha@mit.edu}
\author{Péter Vrana }
\address{Department of Geometry, Budapest University of Technology and Economics, Egry József~u.~1., 1111 Budapest, Hungary\vspace{-0.5em}}
\address{MTA-BME Lend\"ulet Quantum Information Theory Research Group} 
\email{vranap@math.bme.hu}
\author{Jeroen Zuiddam}
\address{Institute for Advanced Study, 1 Einstein Drive, Princeton, NJ 08540, USA}
\email{jzuiddam@ias.edu}
\date{\today}

\begin{document}

\begin{abstract}
We compute the asymptotic induced matching number of the $k$-partite $k$-uniform hypergraphs whose edges are the $k$-bit strings of Hamming weight $k/2$, for any large enough even number~$k$.
Our lower bound relies on the higher-order extension of the well-known Coppersmith--Winograd method from algebraic complexity theory, which was proven by Christandl, Vrana and Zuiddam.
Our result is motivated by the study of the power of this method as well as of the power of the Strassen support functionals (which provide upper bounds on the asymptotic induced matching number), and the connections to questions in  tensor theory, quantum information theory and theoretical computer science.

Phrased in the language of tensors, as a direct consequence of our result, we determine the asymptotic subrank of any tensor with support given by the aforementioned hypergraphs. 
In the context of quantum information theory, our result amounts to an asymptotically optimal~$k$-party stochastic local operations and classical communication (slocc) protocol for the problem of distilling GHZ-type entanglement from a subfamily of Dicke-type entanglement. 

\medskip
\noindent \textbf{Keywords.} %
$k$-partite $k$-uniform hypergraphs, asymptotic induced matchings, higher-order Coppersmith--Winograd method
\end{abstract}

\maketitle

\section{Introduction}\label{intro}

\subsection{Problem}\label{intro:result}
We study in this paper an asymptotic parameter of $k$-partite $k$-uniform hypergraphs: the asymptotic induced matching number.
For $k \in \NN$, a \defin{$k$-partite $k$-uniform hypergraph}, or \defin{$k$-graph} for short, is a tuple of finite sets~$V_1, \ldots, V_k$ together with a subset $\Phi$ of their cartesian product:
\[
\Phi \subseteq V_1\times \cdots \times V_k.
\]
Whenever possible we will leave the vertex sets $V_i$ implicit and refer to the $k$-graph by its edge set~$\Phi$.
For any $k \in \NN$ we use the notation~$[k] \coloneqq \{1, 2, \ldots, k\}$.
Let~$\Phi$ be a $k$-graph.
We say a subset~$\Psi$ of  $\Phi$ is \defin{induced} if~$\Psi = \Phi \cap (\Psi_1 \times \cdots \times \Psi_k)$ where for each~$i \in [k]$ we define the marginal set~$\Psi_i \coloneqq \{a_i : a \in \Psi\}$. 
We call %
$\Psi$ 
a \defin{matching} if any two distinct elements~$a,b \in \Psi$ are distinct in all~$k$ coordinates, that is,~$\forall i \in [k] : a_i \neq b_i$. %
The \defin{subrank}\footnote{The term subrank originates from an analogous parameter in the theory of tensors, see \cref{tensors}.} or  \defin{induced matching number}~$\subrank(\Phi)$ %
is defined as the size of the largest subset $\Psi$ of $\Phi$ that is an induced matching, that is,
\[
\subrank(\Phi) \coloneqq \max \{ \abs[0]{\Psi} : \Psi \subseteq \Phi, \Psi = \Phi \cap (\Psi_1 \times \cdots \times \Psi_k), \forall a\neq b \in \Psi\,\, \forall i \in [k]\,\, a_i \neq b_i\}.
\]

For example, consider the 3-graph $\Phi = \{(1,1,1), (2,2,2), (3,3,3)\} \subseteq [3]\times[3]\times[3]$. Here $\Phi$ is itself an induced matching, and so $\subrank(\Phi) = 3$. Next, let $\Phi = \{(1,1,1), (2,2,2), (3,3,3), (1,2,3)\}$. Now the subset $\{(1,1,1), (2,2,2)\} \subseteq \Phi$ is an induced matching and there is no larger induced matching in $\Phi$, and so $\subrank(\Phi) = 2$.

We define the \defin{Kronecker product} of two $k$-graphs $\Phi \subseteq V_1 \times \cdots \times V_k$ and $\Psi \subseteq W_1\times \cdots \times W_k$ as the~$k$-graph
\begin{align*}
\Phi \boxtimes \Psi &\coloneqq \bigl\{ \bigl((a_1, b_1), \ldots, (a_k, b_k)\bigr) : a \in \Phi, b \in \Psi \bigr\}\\ &\subseteq (V_1 \times W_1) \times \cdots \times (V_k \times W_k),
\end{align*}
and we naturally define the power $\Phi^{\boxtimes n} = \Phi \boxtimes \cdots \boxtimes \Phi$.
The \defin{asymptotic subrank} or 
the \defin{asymptotic induced matching number} of the $k$-graph $\Phi$ is defined as 
\[
\asympsubrank(\Phi) \coloneqq \lim_{n \to \infty} \subrank(\Phi^{\boxtimes n})^{1/n}.
\]
This limit exists and equals the supremum $\sup_{n\in \NN} \subrank(\Phi^{\boxtimes n})^{1/n}$ by Fekete's lemma (see, e.g.,~\cite[No.~98]{MR1492447}). %

We study the following basic question:

\begin{problem}\label{mainproblem}
Given $\Phi$ what is the value of $\asympsubrank(\Phi)$?
\end{problem}

A priori, for $\Phi \subseteq V_1 \times \cdots \times V_k$ we have the upper bound $\subrank(\Phi) \leq \min_i \abs[0]{V_i}$ and therefore holds that $\asympsubrank(\Phi) \leq \min_i \abs[0]{V_i}$, since $\abs[0]{V_i^{\times n}} = \abs[0]{V_i}^n$.

\cref{mainproblem} has been studied for several families of $k$-graphs, in several different contexts: the cap set problem \cite{MR3583358,tao,kleinberg2016growth, norin2016distribution, pebody2016proof}, approaches to fast matrix multiplication~\cite{strassen1991degeneration,MR3631613,blasiak2017groups,sawin2017bounds}, arithmetic removal lemmas \cite{lovasz2018lower,fox2018polynomial}, property testing~\cite{fu_et_al:LIPIcs:2014:4730,Haviv2017}, quantum information theory~\cite{vrana2015asymptotic, Vrana2017}, and the general study of asymptotic properties of tensors~\cite{sawin,christandl2016asymptotic,christandl2017universal}.
We finally mention the related result of Ruzsa and Szemerédi which says that the largest subset~$E \subseteq \binom{n}{2}$ such that $(E \times E \times E) \cap \{ (\{a,b\}, \{b, c\}, \{c, a\}) : a,b,c \in [n] \}$ is a matching, has size $n^{2 - o(1)} \leq \abs[0]{E} \leq o(n^2)$ when $n$ goes to infinity \cite{ruzsa1978triple}, see also \cite[Equation 2]{alon2006extremal}.

\subsection{Result} 
We solve \cref{mainproblem} for a family of $k$-graphs that are structured but nontrivial.
For $k \geq n$ let~$\lambda = (\lambda_1, \ldots, \lambda_n) \vdash k$ be an integer partition of $k$ with $n$ nonzero parts, that is,~$\lambda_1 \geq \lambda_2 \geq \cdots \geq \lambda_n > 0$ and $\sum_{i=1}^n \lambda_i = k$.
We define the $k$-graph
\[
\Phi_\lambda \coloneqq \{ s \in [n]^k : \type(s) = \lambda \}
\]
where the expression $\type(s) = \lambda$ means that $s$ is a permutation of the $k$-tuple
\[
(\underbrace{1, \ldots, 1}_{\lambda_1}, \underbrace{2, \ldots, 2}_{\lambda_2}, \ldots, \underbrace{n, \ldots, n}_{\lambda_n}).
\]
For example, the partition $\lambda = (1,1) \vdash 2$ corresponds to the 2-graph
\[
\Phi_{(1,1)} = \{(2,1),(1,2)\} \subseteq [2] \times [2]
\]
and the partition $\lambda = (2,2)\vdash 4$ corresponds to the 4-graph
\begin{align*}
\Phi_{(2,2)} &= \{(2,2,1,1),(2,1,2,1),(2,1,1,2),(1,2,2,1),(1,2,1,2),(1,1,2,2)\} \subseteq [2]^{\times 4}.
\end{align*}

It was shown in \cite{christandl2016asymptotic} %
that $\asympsubrank(\Phi_{(k-1, 1)}) %
= 2^{H((1-1/k, 1/k))}$ for every $k \in \NN_{\geq 3}$ where $H$ is the Shannon entropy in base 2. %
As a natural continuation of that work we study~$\asympsubrank(\Phi_{(k/2,k/2)})$ for even $k \in \NN$.
Since $\Phi_{(k/2,k/2)} \subseteq [2]^{\times k}$ we have $\asympsubrank(\Phi_{(k/2,k/2)}) \leq 2$. Clearly, the 2-graph $\Phi_{(1,1)}$ is itself a matching, and so $\asympsubrank(\Phi_{(1,1)}) = 2$. It was shown in~\cite{christandl2016asymptotic} that also~$\asympsubrank(\Phi_{(2,2)}) = 2$.
Our new result is the following extension:

\begin{theorem}\label{main}
Let $k \in \NN_{\geq2}$ be even and large enough. Then
$\asympsubrank(\Phi_{(k/2,k/2)}) = 2$.
\end{theorem}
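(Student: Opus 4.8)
The upper bound is immediate: since $\Phi_{(k/2,k/2)}\subseteq[2]^{\times k}$, the a priori estimate gives $\subrank(\Phi_{(k/2,k/2)}^{\boxtimes n})\le 2^n$ and hence $\asympsubrank(\Phi_{(k/2,k/2)})\le 2$. All the content is in the matching lower bound $\asympsubrank(\Phi_{(k/2,k/2)})\ge 2$: for every $\eps>0$ one must exhibit an induced matching of size at least $(2-\eps)^N$ inside $\Phi_{(k/2,k/2)}^{\boxtimes N}$ for some $N$. The plan is to produce such matchings with the higher-order Coppersmith--Winograd method of Christandl, Vrana and Zuiddam.

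Concretely, an edge of $\Phi_{(k/2,k/2)}^{\boxtimes N}$ is a $k\times N$ matrix over $\{1,2\}$ every column of which has exactly $k/2$ entries equal to~$2$, and an induced matching is a family of such matrices that pairwise differ in all $k$ rows and is closed under re-assembling rows that happen to produce a valid matrix. To feed this into the CW method one fixes a base power $\Phi_{(k/2,k/2)}^{\boxtimes m}$, equips each vertex class $\{1,2\}^m$ with a suitable grading (for instance refining the grading by Hamming weight), and chooses the uniform distribution on the edges of $\Phi_{(k/2,k/2)}$, whose marginal on every vertex class is uniform on $\{1,2\}$ --- it is this uniform marginal, of Shannon entropy~$1$, that is responsible for the target value $2^{1}=2$. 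Writing $\Phi_{(k/2,k/2)}^{\boxtimes m}=\bigsqcup_{c}\Phi_{c}$ for the induced decomposition into blocks indexed by grading-profiles~$c$, the method takes as further input lower bounds on the \emph{block} asymptotic subranks $\asympsubrank(\Phi_{c})$.

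Running the method then goes through the familiar CW steps. One checks that the block decomposition is \emph{tight} in the required sense (here one uses the balanced, highly symmetric combinatorics of $\Phi_{(k/2,k/2)}$); one passes to a large power of $\Phi_{(k/2,k/2)}^{\boxtimes m}$, discards all but the blocks whose profile-histogram is typical for a chosen distribution $Q$ on profiles, and thins these out along a Salem--Spencer (progression-free) set so that at the level of blocks one obtains an honest induced matching, losing only a $2^{o(\cdot)}$ factor. Since all of these are monomial operations (zeroing out coordinates and selecting along a progression-free set), combining the surviving block structure with induced matchings \emph{inside} the blocks yields a genuine induced matching in a power of $\Phi_{(k/2,k/2)}$, and one arrives at a bound of the form
\[
\asympsubrank(\Phi_{(k/2,k/2)})\ \ge\ 2^{H(Q)/m}\,\prod_{c}\asympsubrank(\Phi_{c})^{Q(c)/m},
\]
to be optimized over $m$ and $Q$.

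The remaining --- and I expect decisive --- difficulty is to make this optimum equal to~$2$. Since $\asympsubrank(\Phi_c)$ is at most the smallest vertex-class size of $\Phi_c$ and the factor $2^{H(Q)/m}$ is at most the number of grading-profiles to the power $1/m$ (typically close to~$1$), essentially all of the value must come from the blocks: one needs a choice of $Q$ together with block estimates showing that $\asympsubrank(\Phi_c)$ is as close as possible to its trivial maximum for each $c$ in the support of~$Q$. Establishing such estimates --- by nesting a second application of the method inside each block, or by a direct combinatorial construction --- is the main obstacle, and is exactly where the hypothesis that $k$ is large enters: for large $k$ the graph $\Phi_{(k/2,k/2)}$ and its blocks carry an exponential-in-$k$ amount of combinatorial room (quantitatively, $\binom{k}{k/2}$ is exponentially large), enough for the block analysis to close without loss, whereas for small $k$ it does not and the method stays strictly below~$2$. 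Once the block estimates are in hand, assembling them with the outer entropy factor and optimizing over $m$ and $Q$ gives $\asympsubrank(\Phi_{(k/2,k/2)})\ge 2$, which together with the trivial upper bound completes the proof.
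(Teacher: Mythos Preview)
Your proposal is not a proof: you explicitly leave the ``main obstacle'' --- the block estimates --- open, and you give no indication of how to close it. More importantly, the strategy you sketch is not the one that actually works here, and the place you expect ``large $k$'' to enter is not where it enters in the paper.

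The paper does \emph{not} pass to a power $\Phi^{\boxtimes m}$, decompose into blocks, and recurse on $\asympsubrank(\Phi_c)$. It applies the higher-order CW lower bound (\cref{cw}) \emph{directly} to $\Phi=\Phi_{(k/2,k/2)}$ with the obvious tightness witness $\alpha_i=\id$ (shifted on the last coordinate) and with $P$ uniform on $\Phi$. The formula in \cref{cw} then reads
\[
\log_2\asympsubrank(\Phi)\ \ge\ \log_2\binom{k}{k/2}\ -\ (k-2)\,\max_{R\in\mathscr{R}}\frac{\log_2|R|-\log_2\binom{k}{k/2}}{r(R)},
\]
after bounding $H(Q)$ by $\log_2|R|$. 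There are no inner block subranks to estimate; the entire task is to control, uniformly over all admissible $R$, the ratio on the right. Your displayed product bound $2^{H(Q)/m}\prod_c\asympsubrank(\Phi_c)^{Q(c)/m}$ never appears and is not what \cref{cw} produces.

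The substantive step you are missing is the reduction of that maximization to a clean counting inequality. Since $\Phi\subseteq\{0,1\}^k$ and $r(R)\ge r_2(R)=\dim_{\FF_2}\Span\{x-y:(x,y)\in R\}$, it suffices (\cref{red}) to show: for every subspace $V\le\{x\in\FF_2^k:x_k=0\}$,
\[
\bigl|\{(x,y):|x|=|y|=k/2,\ x-y\in V\}\bigr|\ \le\ \binom{k-1}{k/2}^{\,\dim V/(k-2)+1}.
\]
This is \cref{cl1}. The paper proves it by splitting on $\dim V$: for $\dim V\le 11k/12$ by an elementary term-by-term estimate organized via the weight-counting function $f(k,m)$ and a Stirling computation (\cref{thm:mainthmsmalln}); for $\dim V\ge 11(k-1)/12$ by Fourier analysis on $\FF_2^{k-1}$, using the explicit middle Krawchouk polynomial and the KKL influence inequality to bound the weight distribution of $V^\perp$ (\cref{largelargethm}). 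The ``$k$ large enough'' hypothesis is purely an artifact of the constants in these two analytic arguments meeting; it is not, as you suggest, that the method ``stays strictly below $2$'' for small $k$ --- indeed the paper verifies numerically that the same bound gives $2$ for all even $k\le 2000$.

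In short: drop the block-decomposition picture, apply \cref{cw} once with uniform $P$, and focus on proving the subspace counting inequality \cref{cl1}. That is where all the work lies, and it requires the Fourier/KKL machinery, none of which your outline anticipates.
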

In other words, we prove that for every large enough even $k \in \NN_{\geq2}$ there is an induced matching~$\Psi\subseteq \Phi_{(k/2,k/2)}^{\boxtimes n}$ %
of size $\abs[0]{\Psi} =2^{n - o(n)}$ when $n$ goes to infinity. 

Moreover, we numerically verified that~$\asympsubrank(\Phi_{(k/2,k/2)}) = 2$ also holds for all even $k \leq 2000$.
We conjecture that $\asympsubrank(\Phi_{(k/2,k/2)}) = 2$ for all even $k$.
More generally, we conjecture (cf.~\cite{vrana2015asymptotic} and \cite[Question~1.3.3]{christandl2016asymptotic}) that $\log_2 \asympsubrank(\Phi_\lambda)$ equals the Shannon entropy of the probability distribution obtained by normalising the partition~$\lambda$. We will discuss further motivation and background in \cref{motiv}.

\subsection{Methods}

We prove \cref{main} by applying the higher-order Coppersmith--Wi\-no\-grad~(CW) method from \cite{christandl2016asymptotic} to the $k$-graph $\Phi_{(k/2,k/2)}$. This method is an extension of the work of  %
Coppersmith and Winograd \cite{coppersmith1987matrix} and Strassen~\cite{strassen1991degeneration} from the case $k = 3$ to the case $k\geq 4$. 
It provides a construction of large induced matchings in $k$-graphs via the probabilistic method, and we prove \cref{main} by analysing the size of these induced matchings.

\begin{theorem}[Higher-order CW method \cite{christandl2016asymptotic}]\label{cw}
Let $\Phi \subseteq V_1\times \cdots \times V_k$ be a nonempty $k$-graph for which there exist injective maps $\alpha_i : V_i \to \ZZ$ such that for all $a \in \Phi$ the equality
\[
\alpha_1(a_1) + \cdots + \alpha_k(a_k) = 0
\]
holds.
For any $R \subseteq \Phi \times \Phi$ let $r(R)$ be the rank over $\QQ$ of the $\abs[0]{R}\times k$ matrix with rows %
\[
\{ \alpha(x) - \alpha(y) : (x,y) \in R\},
\]
where $\alpha(x) \coloneqq (\alpha_1(x_1), \ldots, \alpha_k(x_k))\in \ZZ^k$.
Then
\begin{equation}\label{lb}
\log_2 \asympsubrank(\Phi) \geq \max_{P\in \mathscr{P}} \Bigl( H(P) - (k-2) \max_{R\in \mathscr{R}} \frac{\max_{Q\in \mathscr{Q}_{R, (P_1, \ldots, P_k)}} H(Q) - H(P)}{r(R)}  \Bigr)
\end{equation}
where the parameters $P$, $R$ and $Q$ are taken over the following domains:
\begin{itemize}
\item $\mathscr{P}$ is the set of probability distributions on $\Phi$ %
\item $\mathscr{R}$ is the set of subsets of $\Phi \times \Phi$ that are not a subset of $\{(x,x) : x \in \Phi\}$ and moreover satisfy $\exists i \in [k]\, \forall (x,y) \in R\colon x_i = y_i$
\item $\mathscr{Q}_{R, (P_1, \ldots, P_k)}$ is the set of probability distributions on $R \subseteq \Phi \times \Phi$ %
with marginal distributions equal to $P_1, \ldots, P_k, P_1, \ldots, P_k$ respectively. %
\end{itemize}
Here for $P \in \mathscr{P}$ we denote by $P_1, \ldots, P_k$ the marginal probability distributions of $P$ on the components~$V_1, \ldots, V_k$ respectively, and $H$ denotes Shannon entropy.
\end{theorem}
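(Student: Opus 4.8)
The plan is to derive \eqref{lb} from a probabilistic construction of large induced matchings in tensor powers, in the spirit of the Coppersmith--Winograd / Strassen laser method. Fix a rational distribution $P \in \mathscr{P}$; it suffices to produce, for $n$ along a suitable subsequence, an induced matching in $\Phi^{\boxtimes n}$ of size $2^{n(H(P) - (k-2)t_P) - o(n)}$ where $t_P \coloneqq \max_{R\in\mathscr{R}}\bigl(\max_{Q\in\mathscr{Q}_{R,(P_1,\ldots,P_k)}} H(Q) - H(P)\bigr)/r(R)$, and then pass to the supremum over $n$ via Fekete's lemma and over a set of rational $P$ dense in $\mathscr{P}$, using continuity of $H$, of $r$, and of the constraint sets. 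Take $n$ with $nP$ integral and let $\mathcal{I} \subseteq \Phi^{\boxtimes n}$ be the set of $n$-tuples of exact type $P$, so $|\mathcal{I}| = 2^{nH(P) - o(n)}$ and every $x \in \mathcal{I}$ has $i$-th coordinate of type $P_i$ for each $i$; the desired $\Psi$ will be a pruned subset of a single hash class of $\mathcal{I}$.

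Next I would classify the obstructions to a subset $\Psi \subseteq \mathcal{I}$ being an induced matching. Since all elements of $\mathcal{I}$ have the same coordinate types, $\Psi$ fails exactly when it contains a matching violation --- distinct $x, y \in \Psi$ agreeing in some coordinate $i$ --- or, once matchings are ensured, a hole: some $z \in \Phi^{\boxtimes n} \setminus \Psi$ with $z_i \in \Psi_i$ for all $i$. A key step is to show that holes are controlled by, or reducible to, pairwise obstructions, so that the relevant configurations are indexed precisely by a support $R \in \mathscr{R}$ (a set of pairs sharing one coordinate) together with a joint type $Q$ on $R$ whose coordinate marginals are $P_1, \ldots, P_k, P_1, \ldots, P_k$, i.e.\ $Q \in \mathscr{Q}_{R,(P_1,\ldots,P_k)}$; the number of bad pairs of shape $(R,Q)$ inside $\mathcal{I}$ is $2^{nH(Q) - o(n)}$.

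The construction then uses random hashing by the grading. Choose a prime $M$ just above $2^{t_P n}$ and i.i.d.\ uniform $w_1, \ldots, w_n \in \ZZ/M\ZZ$, and set $h(x) \coloneqq \sum_{j=1}^n w_j\, \alpha(x^{(j)}) \in (\ZZ/M\ZZ)^k$, which lies in the hyperplane $\{v : \sum_i v_i = 0\}$ because $\sum_i \alpha_i(a_i) = 0$ on $\Phi$. For a bad pair $(x,y)$ of shape $(R,Q)$ sharing coordinate $i$, the event $h(x) = h(y)$ forces $w$ to annihilate $\Span\{\alpha(x') - \alpha(y') : (x',y') \in R\}$, which has rank $r(R)$ over $\QQ$, hence over $\ZZ/M\ZZ$ for $M$ large, so it has probability $M^{-r(R)}(1+o(1))$; as these difference rows satisfy both $\sum_i v_i = 0$ and $v_i = 0$ one has $r(R) \le k-2$. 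A first-moment bound yields $\mathbb{E}_w \sum_\gamma \#\{\text{same-class bad pairs in class }\gamma\} \le \sum_{R,Q} 2^{nH(Q) - o(n)} M^{-r(R)} \le 2^{nH(P) - \Omega(n)}$ by the choice of $M$ (there are only polynomially many types $Q$ and finitely many $R$). Fixing such a $w$ and then selecting a hash class $C$ that is large and carries few same-class bad pairs --- here one must arrange that the selection loses only a factor $M^{k-2}$, not $M^{k-1}$, so that $|C| \ge 2^{n(H(P) - (k-2)t_P) - o(n)}$ --- and deleting one tuple from each remaining bad pair, produces the desired induced matching (up to an arbitrarily small slack in the exponent that one lets tend to zero).

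I expect the main obstacle to be precisely obtaining the exponent $k-2$ rather than the $k-1$ that a naive ``one global hash, then pigeonhole'' argument would give. This seems to require combining the single linear relation among the hash coordinates coming from the grading with the fact that a matching violation already pins down one coordinate --- so that effectively only $k-2$ transverse directions need to be separated --- and converting this into an honest saving of one factor of $M$ in the class selection, for instance through an iterated selection organized around the codimension-two subspaces $\{v : \sum_i v_i = 0,\ v_i = 0\}$, while handling all $k$ choices of the pinned coordinate and all holes simultaneously. The subsidiary technical point is the reduction of general holes, which can have up to $k$ distinct witnesses, to the pairwise obstructions recorded by $\mathscr{R}$ and $\mathscr{Q}_{R,(P_1,\ldots,P_k)}$, which is what matches the construction to the exact form of \eqref{lb}.
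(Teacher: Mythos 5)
Note first that this paper does not prove \cref{cw} at all: it is imported verbatim from \cite{christandl2016asymptotic}, so your attempt can only be measured against the proof given there. Your outline correctly reproduces the outer shell of that proof: restrict to the type class of a rational $P$, hash through the tight grading by $h(x)=\sum_j w_j\,\alpha(x^{(j)})$ with $w$ uniform over $\ZZ/M\ZZ$ for a large prime $M$, observe that a collision $h(x)=h(y)$ for a pair sharing a coordinate occurs with probability $M^{-r(R)}$ (correct, since the difference rows span a lattice of $\QQ$-rank $r(R)$ with entries bounded independently of $n$), do a first-moment count over the polynomially many joint types $Q$, and finish with a density/continuity argument in $P$. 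These ingredients are indeed in the cited proof.

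However, the two steps you explicitly defer are precisely the content of the theorem, and as sketched your argument does not deliver them. (i) The exponent $k-2$: with one global hash and a pigeonhole over the $M^{k-1}$ classes of the hyperplane $\{v:\sum_i v_i=0\}$, the selected class has size about $2^{nH(P)}M^{-(k-1)}$, which only yields $H(P)-(k-1)t_P$; your proposed fix (``iterated selection organized around codimension-two subspaces'') is not an argument and, as described, still pays for $k-1$ independent hash coordinates. In the known proof the saving of one factor of $M$ comes from a different device: the admissible hash values are restricted to a structured set of size $M^{1-o(1)}$ (a Salem--Spencer/Behrend-type set adapted to the linear relation $\sum_i v_i=0$, exactly as in the $k=3$ method of Coppersmith--Winograd and Strassen), chosen so that any violating configuration forces all participating hash vectors to coincide; only then does the cleaning reduce to same-class pairs. (ii) The reduction of the ``induced'' condition to pairwise obstructions is not something to be assumed: the known argument uses that a mixed edge $z=(a^{(1)}_1,\dots,a^{(k)}_k)\in\Phi^{\boxtimes n}$ built from selected elements inherits their common hash vector and has coordinate marginals $P_1,\dots,P_k$ (but in general \emph{not} joint type $P$), so one cleans by removing any selected $x$ that shares a coordinate and a hash vector with such a $z$. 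This is exactly why $\mathscr{Q}_{R,(P_1,\dots,P_k)}$ is defined by the doubled marginal condition rather than by both components having type $P$; your phrase ``bad pairs inside $\mathcal{I}$'' misses that one foot of the pair must be allowed to lie outside the type class. Until (i) and (ii) are supplied, the proposal proves only the weaker bound with $k-1$ in place of $k-2$, which is insufficient for the application in \cref{red}. (A minor point: take $M\approx 2^{n(t_P+\eps)}$ rather than $2^{nt_P}$, and use $r(R)\ge 1$, to make the first-moment sum $2^{nH(P)-\Omega(n)}$ as claimed.)
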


Let $\lambda\vdash k$ be any integer partition of $k$ with $n$ nonzero parts. We can apply 
\cref{cw} to the $k$-graph $\Phi = \Phi_\lambda$ as follows.
For every $a \in \Phi_\lambda$ the equality
\begin{equation}\label{eqsum}
\sum_{\smash{i=1}}^k a_i = \sum_{\smash{j=1}}^n j \lambda_j
\end{equation}
holds, since the element $j$ occurs $\lambda_j$ times in $a$.
Let $\alpha_1, \ldots, \alpha_{k-1}$ be identity maps $\ZZ\to \ZZ$ and let $\alpha_k : \ZZ\to\ZZ : x \mapsto x - \sum_{j=1}^{\smash{n}} j \lambda_j$.
Then, because of \eqref{eqsum},
$\forall a \in \Phi_\lambda \colon \alpha_1(a_1) + \cdots + \alpha_k(a_k) = 0$.
(Note that with this choice of maps $\alpha_1, \ldots, \alpha_k$ we have that $\alpha(x) - \alpha(y)$ equals $x - y$ for every $(x,y) \in R$.)
Therefore \cref{cw} can be applied to obtain a lower bound on $\asympsubrank(\Phi_\lambda)$ for any partition $\lambda$. The difficulty now lies in evaluating the right-hand side of \eqref{lb}.

Let us return to the case $\lambda = (k/2,k/2)$.
To prove \cref{main} via \cref{cw} we will show for every large enough even $k \in \NN$ and $\Phi = \Phi_{(k/2,k/2)}$ that the right-hand side of \eqref{lb} is at least~2, using the aforementioned choice of injective maps $\alpha_1, \ldots, \alpha_k$.
In \cref{counting} we prove that this follows from the following statement, which may be of interest on its own.
\begin{theorem}\label{cl1}
For any large enough even $k \in \NN_{\geq4}$ and subspace $V \subseteq \{x \in \FF_2^k : x_k = 0\} \subseteq \FF_2^k$ the inequality
\begin{equation}\label{lhsth14}
\abs[1]{\bigl\{ (x,y) \in \FF_2^k \times \FF_2^k : \abs[0]{x} = \abs[0]{y} = \tfrac{k}{2},\, x-y \in V \bigr\}} \leq \smash{\binom{k-1}{k/2}^{\!\frac{\dim_{\FF_2}\!(V)}{k-2} + 1}}
\end{equation}
holds. Here $\abs[0]{x}$ denotes the Hamming weight of $x \in \FF_2^k$.
\end{theorem}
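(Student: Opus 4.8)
\emph{Reformulation.} The plan is first to rewrite the left-hand side of \eqref{lhsth14}. For $z\in\FF_2^k$ let $c(z)$ be the number of $x\in\FF_2^k$ with $\abs[0]{x}=\abs[0]{x+z}=k/2$; writing $z=x-y$ (which equals $x+y$ over $\FF_2$) identifies the set in \eqref{lhsth14} with $\{(z,x):z\in V,\ \abs[0]{x}=\abs[0]{x+z}=k/2\}$, of cardinality $\sum_{z\in V}c(z)$. Splitting $x$ according to its restriction to $\supp z$ and to the complement gives $c(z)=0$ for $\abs[0]{z}$ odd and $c(z)=\binom{\abs[0]{z}}{\abs[0]{z}/2}\binom{k-\abs[0]{z}}{(k-\abs[0]{z})/2}$ for $\abs[0]{z}$ even; in particular $c(z)$ depends only on $w\coloneqq\abs[0]{z}$, is symmetric under $w\mapsto k-w$, and (comparing consecutive even values) is strictly decreasing on $\{0,2,4,\dots\}$ up to $k/2$. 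Equivalently, with $M=\{x:\abs[0]{x}=k/2\}$, $\sum_{z\in V}c(z)=\sum_C\abs[0]{C\cap M}^2$ over the cosets $C$ of $V$, which makes the subspace structure explicit.

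\emph{Two tools.} The estimate will combine: (i) a pointwise bound from Stirling, $c(z)\le\binom{k}{k/2}\,g\!\bigl(\min(\abs[0]{z},k-\abs[0]{z})\bigr)$, where $g\colon\{0,\dots,k/2\}\to(0,1]$ is non-increasing with $g(0)=1$ and $g(j)\asymp\sqrt{k/(j(k-j))}\asymp j^{-1/2}$ for $1\le j\le k/2$ (so $\Theta(k^{-1/2})$ near $j=k/2$); and (ii) a bound on the weight distribution of a subspace via reduced row echelon form: if $V\subseteq\FF_2^n$ has dimension $d$ with pivot columns $j_1<\dots<j_d$, then every vector of a coset $v_0+V$ is uniquely $v_0+\sum_{\ell\in T}v_\ell$ for some $T\subseteq[d]$, and its weight is already at least $\abs[0]{T\triangle T_0}$ on the pivot coordinates, where $T_0=\{\ell\in[d]:(v_0)_{j_\ell}=1\}$; hence for every non-increasing $h\ge0$,
\[
\sum_{a\in v_0+V}h(\abs[0]{a})\ \le\ \sum_{T\subseteq[d]}h\bigl(\abs[0]{T\triangle T_0}\bigr)\ =\ \sum_{i=0}^{d}\binom{d}{i}h(i).
\]

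\emph{Putting it together.} Since $c$ decreases only up to $k/2$, I would split $\sum_{z\in V}c(z)$ by whether $\abs[0]{z}\le k/2$. The first part is at most $\binom{k}{k/2}\sum_{z\in V}g(\min(\abs[0]{z},k/2))$, which tool (ii) (applied to $V$ with $h(i)=g(\min(i,k/2))$) bounds by $\binom{k}{k/2}\sum_{i=0}^{d}\binom{d}{i}g(\min(i,k/2))$. For the second part — and here the hypothesis $V\subseteq\{x_k=0\}$ enters — the translation $z\mapsto z'=z+\mathbf{1}_{[k-1]}$ by the all-ones vector on coordinates $1,\dots,k-1$ is a bijection of $V$ onto the coset $\mathbf{1}_{[k-1]}+V$ with $\abs[0]{z}>k/2\iff\abs[0]{z'}<k/2$ and $k-\abs[0]{z}=\abs[0]{z'}+1$, so the second part equals $\sum_{z'\in\mathbf{1}_{[k-1]}+V,\ \abs[0]{z'}<k/2}c(\abs[0]{z'}+1)\le\binom{k}{k/2}\sum_{z'\in\mathbf{1}_{[k-1]}+V}g(\min(\abs[0]{z'},k/2))$, and tool (ii) for that coset bounds this by the same $\binom{k}{k/2}\sum_{i}\binom{d}{i}g(\min(i,k/2))$. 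Therefore $\sum_{z\in V}c(z)\le 2\binom{k}{k/2}\sum_{i=0}^{d}\binom{d}{i}g(\min(i,k/2))$, and it remains to verify, for $k$ large, the resulting purely numerical inequality relating $\sum_{i}\binom{d}{i}g(\min(i,k/2))$ to $\binom{k-1}{k/2}^{\dim V/(k-2)}$ — a Stirling computation in which the exponent $\dim V/(k-2)$ is exactly what absorbs the decay of $g$, with the cases $\dim V=0$ and $\dim V=k-1$ (essentially) extremal.

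\emph{Main obstacle.} The delicate point is the non-monotonicity of $c(w)$: it is as large near $w=k$ as near $w=0$, whereas a subspace of $\{x_k=0\}$ can be concentrated at low weight (e.g.\ spanned by weight-two vectors) while, if it also lies in the even-weight hyperplane, simultaneously having many elements of weight near $k$. Folding these high-weight elements onto the low-weight elements of the complementary coset $\mathbf{1}_{[k-1]}+V$, where the same echelon estimate applies, is what reconciles the two regimes, but it costs an overall factor of~$2$; since \eqref{lhsth14} has essentially no asymptotic slack, the concluding Stirling estimate has to be sharp enough to lose nothing more than a constant.
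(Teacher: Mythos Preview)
Your reformulation $\sum_{z\in V}c(z)$ with $c(z)=\binom{|z|}{|z|/2}\binom{k-|z|}{(k-|z|)/2}$ is correct and is exactly the paper's starting point. The gap is the factor of~$2$ you yourself flag in the last paragraph: it is not absorbable. The target inequality is \emph{exactly tight} at both ends. At $\dim V=0$ one has $c(0)=\binom{k}{k/2}=2\binom{k-1}{k/2}$, already twice the stated right side --- indeed the paper's proof silently works with the version constrained to $x_k=y_k=0$, which halves the left side and makes $\dim V=0$ an equality. At $\dim V=k-2$, taking $V$ to be the even-weight subspace of $\{z:z_k=0\}$ gives $\sum_{z\in V}c(z)=2\binom{k-1}{k/2}^2$, again exactly the (correctly normalized) right side. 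There is thus no constant-factor slack at either endpoint, and your bound $2\binom{k}{k/2}\sum_i\binom{d}{i}g(\min(i,k/2))$ --- which at $d=0$ already equals $4\binom{k-1}{k/2}$ --- overshoots by a factor of~$2$ throughout; the deferred ``Stirling computation'' cannot succeed.

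The paper avoids this with a two-regime argument. For $\dim V\le 11k/12$ it fixes a weight threshold~$s$, uses the ambient bound $|\{z\in V:|z|=m\}|\le\binom{k-1}{m}$ for $m<s$ together with $\sum_{m\ge s}a_m\le 2^d$, and optimizes over~$s$; this loses nothing at $d=0$. For $\dim V\ge 11k/12$ it passes to the Fourier side, rewriting $\sum_{z\in V}c(z)$ as a Krawchouk-weighted sum over $V^\perp$ and controlling the (now low-dimensional) weight distribution of $V^\perp$ via the KKL inequality. Your echelon bound~(ii) is intrinsically a low-weight tool --- it only sees the pivot coordinates --- so it cannot tightly handle subspaces rich in high-weight vectors without the folding, and the folding is precisely what costs the unrecoverable factor of~$2$.
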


In~\cref{largensmalldim} we prove \cref{cl1} for low-dimensional $V$ by carefully splitting the left-hand side of \eqref{lhsth14} into two parts and upper bounding these parts. In \cref{largenlargedim} we prove \cref{cl1} for high-dimensional $V$ using Fourier analysis, Krawchouk polynomials and the Kahn--Kalai--Linial (KKL) inequality \cite{kahn1988influence}.
We thus prove \cref{cl1} and hence \cref{main}. While in our current proof the tools for the low- and high-dimensional cases are used complementarily, it may be possible that the full \cref{main} can be proven by cleverly using only the low-dimensional tools or only the high-dimensional tools.

\subsection{Motivation and background}\label{motiv} %

Our original motivation to study the asymptotic induced matching number of $k$-graphs comes from a connection to the study of asymptotic properties of tensors. In fact, the interplay in this connection goes both directions.
The purpose of this section is to discuss the asymptotic study of tensors and the connection with the asymptotic induced matching number.
Reading this section is not required to understand the rest of the paper. %

\subsubsection{Asymptotic rank and asymptotic subrank of tensors}\label{tensors}

The asymptotic study of tensors is a field of its own that started with the work of Strassen \cite{strassen1987relative, strassen1988asymptotic, strassen1991degeneration} in the context of fast matrix multiplication. We begin by introducing two fundamental asymptotic tensor parameters: asymptotic rank and asymptotic subrank.

Let $\FF$ be a field.
Let $a \in \FF^{n_1} \otimes \cdots \otimes \FF^{n_k}$ and $b \in \FF^{m_1} \otimes \cdots \otimes \FF^{m_k}$ be $k$-tensors. We write $a \leq b$ if there are linear maps $A_i : \FF^{m_i} \to \FF^{n_i}$ for $i \in [k]$ such that $a = (A_1 \otimes \cdots \otimes A_k)(b)$. 
For~$n \in \NN$ let $\{e_n : j \in [n]\}$ be the standard basis of~$\FF^n$.
For~$n \in \NN$ define the $k$-tensor
\[
\langle n \rangle \coloneqq \sum_{i=1}^n e_i \otimes \cdots \otimes e_i \in (\FF^n)^{\otimes k}.
\]
The rank of the $k$-tensor $a$ is defined as $\rank(a) \coloneqq \min\{ n \in \NN : a \leq \langle n \rangle\}$.
The subrank of the $k$-tensor $a$ is defined as 
\begin{equation}\label{Qdef}
\subrank(a) \coloneqq \max \{n \in \NN : \langle n \rangle \leq a\}.
\end{equation}
One can think of tensor rank as a measure of the complexity of a tensor, namely the ``cost'' of the tensor in terms of the diagonal tensors $\langle n\rangle$. It has been studied in several contexts, see, e.g.,~\cite{burgisser1997algebraic,landsberg2012tensors}. In this language, the subrank is the ``value'' of the tensor in terms of $\langle n\rangle$ and as such is the natural companion to tensor rank. It has its own applications, which we will elaborate on after having discussed the asymptotic viewpoint.

Writing~$a$ and $b$ in the standard basis as $a = \sum_i a_i\, e_{i_1} \otimes \cdots \otimes e_{i_k}$, $b = \sum_j b_j\, e_{j_1} \otimes \cdots \otimes e_{j_k}$,
the tensor Kronecker product $a\boxtimes b$ is the $k$-tensor defined by 
\[
a \boxtimes b \coloneqq \sum_{i,j} a_i b_j\,\, (e_{i_1} \otimes e_{j_1}) \otimes \cdots \otimes (e_{i_k}\otimes e_{j_k}) \in (\FF^{n_1} \otimes \FF^{m_1}) \otimes \cdots \otimes (\FF^{n_k}\otimes \FF^{m_k}).
\]
In other words, the $k$-tensor $a \boxtimes b$ is the image of the $2k$-tensor $a \otimes b$ under the natural regrouping map $\FF^{n_1} \otimes \cdots \otimes \FF^{n_k} \otimes \FF^{m_1} \otimes \cdots \otimes \FF^{m_k} \to (\FF^{n_1} \otimes \FF^{m_1}) \otimes \cdots \otimes (\FF^{n_k} \otimes \FF^{m_k})$.
The asymptotic rank of $a$ is defined as $\asymprank(a) \coloneqq \lim_{n\to\infty} \rank(a^{\boxtimes n})^{1/n}$ and
the asymptotic subrank of $a$ is defined as $\asympsubrank(a) \coloneqq \lim_{n\to\infty} \subrank(a^{\boxtimes n})^{1/n}$. These limits exist and equal the infimum $\inf_n \rank(a^{\boxtimes n})^{1/n}$ and the supremum $\sup_n \subrank(a^{\boxtimes n})^{1/n}$, respectively. This follows from Fekete's lemma and the fact that~$\rank(a\boxtimes b) \leq \rank(a)\rank(b)$ and $\subrank(a\boxtimes b) \geq \subrank(a) \subrank(b)$. 

Tensor rank is known to be hard to compute~\cite{HASTAD1990644} (the natural tensor rank decision problem is NP-hard).
Not much is known about the complexity of computing subrank, asymptotic subrank and asymptotic rank. 
It is a long-standing open problem in algebraic complexity theory to compute the asymptotic rank of the matrix multiplication tensor. The asymptotic rank of the matrix multiplication tensor corresponds directly to the asymptotic algebraic complexity of matrix multiplication. The asymptotic subrank of 3-tensors also plays a central role in the context of matrix multiplication, for example in recent work on barriers for upper bound methods on the asymptotic rank of the matrix multiplication tensor \cite{christandl2018barriers, alman2018limits}. As another example, in combinatorics, the resolution of the cap set problem \cite{MR3583358,tao} can be phrased in terms of the asymptotic subrank of a well-chosen 3-tensor, cf.~\cite{christandl2016asymptotic}, via the general connection to the asymptotic induced matching number that we will review now.

The subrank of $k$-tensors as defined in \eqref{Qdef} and the subrank of $k$-graphs as defined in \cref{intro:result} are related as follows.
For any $k$-tensor $a = \sum_i a_i\, e_{i_1} \otimes \cdots \otimes e_{i_k} \in \FF^{n_1} \otimes \cdots \otimes \FF^{n_k}$ we define the $k$-graph $\supp(a)$ as the support of $a$ in the standard basis:
\[
\supp(a) \coloneqq \{i \in [n_1]\times \cdots \times [n_k] : a_i \neq 0\}.
\]
It is readily ve\-ri\-fied that the subrank of the $k$-graph $\supp(a)$ is at most the subrank of the $k$-tensor~$a$, that is,~$\subrank(\supp(a)) \leq \subrank(a)$.
The reader may also verify directly that $\supp(a \boxtimes b) = \supp(a) \boxtimes \supp(b)$. Therefore, the asymptotic subrank of the support of $a$ is at most the asymptotic subrank of the $k$-tensor~$a$, that is,
\begin{equation}\label{eqa}
\asympsubrank(\supp(a)) \leq \asympsubrank(a).
\end{equation}
We can read~\eqref{eqa} in two ways. On the one hand, given any $k$-tensor $a$ we may find lower bounds on~$\asympsubrank(a)$ by finding lower bounds on $\asympsubrank(\supp(a))$. On the other hand, given any $k$-graph $\Phi \subseteq [n_1] \times \cdots \times [n_k]$ the asymptotic subrank $\asympsubrank(\Phi)$ is upper bounded by~$\asympsubrank(a)$ for any tensor $a \in \FF^{n_1} \otimes \cdots \otimes \FF^{n_k}$ (over any field $\FF$) with support equal to~$\Phi$, that is,%
\begin{equation}\label{Phiub}
\asympsubrank(\Phi) \leq \min_{\textnormal{field $\FF$}}\,\, \min_{\substack{a \in \FF^{n_1} \otimes\cdots \otimes \FF^{n_k} :\\ \supp(a) = \Phi}}\, \asympsubrank(a).
\end{equation}
We do not know whether the inequality in \eqref{Phiub} can be strict. We will discuss these two directions in the following two sections.

\subsubsection{Upper bounds on asymptotic subrank of $k$-tensors}\label{Qub}
Let us focus on the task of finding upper bounds on the asymptotic subrank of $k$-tensors. One natural strategy is to construct maps ${\phi : \{\textnormal{$k$-tensors over $\FF$}\} \to \RR_{\geq0}}$ that are sub-multiplicative under the tensor Kronecker product~$\boxtimes$, normalised on~$\langle n\rangle$ to $n$, and monotone under $\leq$, that is,~for any $k$-tensors~$a$ and~$b$ and for any~$n \in \NN$:
\begin{gather}
\phi(a\boxtimes b) \leq \phi(a) \phi(b)\label{p1}\\
\phi(\langle n\rangle) = n\label{p2}\\
a \leq b \Rightarrow \phi(a) \leq \phi(b).\label{p3}
\end{gather} 
The reader verifies directly that for any such map $\phi$ the inequality $\asympsubrank(a) \leq \phi(a)$ holds. 

Strassen in \cite{strassen1991degeneration}, motivated by the study of the algebraic complexity of matrix multiplication, introduced an infinite family of maps
\begin{equation*}\label{suppf}
\zeta^\theta : \{\textnormal{$k$-tensors over $\FF$}\} \to \RR_{\geq0}
\end{equation*}
parametrised by probability vectors $\theta\in\RR_{\geq0}^k$, $\sum_{i=1}^k\theta_i = 1$.
The maps $\zeta^\theta$ are called the upper support functionals. We will not define them here.
Strassen proved that each map $\zeta^\theta$ satisfies conditions~\eqref{p1},~\eqref{p2} and \eqref{p3}. Thus
\begin{equation}\label{zetaub}
\asympsubrank(a) \leq \min_\theta \zeta^\theta(a).
\end{equation}

Tao, motivated by the study of the cap set problem, proved in \cite{tao} that subrank is upper bounded by a parameter called slice rank, that is,~$\subrank(a) \leq \slicerank(a)$. We do not define slice rank here. While slice rank is easily seen to be normalised on~$\langle n\rangle$ and monotone under~$\leq$,  slice rank is not sub-multiplicative (see, e.g.,~\cite{christandl2017universal}). However, it still holds that 
\[
\asympsubrank(a) \leq \liminf_{n\to\infty} \slicerank(a^{\boxtimes n})^{1/n}.
\]
It turns out  \cite{sawin, christandl2017universal} that 
\[
\limsup_{n\to\infty} \slicerank(a^{\boxtimes n})^{1/n} \leq \min_\theta \zeta^\theta(a).
\]
No examples are known for which this inequality is strict.
It is known that for so-called oblique tensors holds $\limsup_{n\to\infty} \slicerank(a^{\boxtimes n})^{1/n} = \min_\theta \zeta^\theta(a)$ \cite{christandl2017universal}.

\subsubsection{Lower bounds on asymptotic subrank of $k$-graphs}
We now consider the task of finding lower bounds on the asymptotic subrank of $k$-graphs. 
For $k=3$ the CW method introduced by Coppersmith and Winograd \cite{coppersmith1987matrix} and extended by Strassen \cite{strassen1991degeneration} gives the following. 
Let $\Phi \subseteq V_1 \times V_2 \times V_3$ be a 3-graph for which there exist injective maps $\alpha_i : V_i \to \ZZ$ such that~$\forall a\in \Phi\colon \alpha_1(a_1) + \alpha_2(a_2) + \alpha_3(a_3) = 0$. Then
\begin{equation}\label{cw3}
\log_2 \asympsubrank(\Phi) \geq \max_{P \in \mathscr{P}} \min_{i\in[3]} H(P_i)
\end{equation}
where $\mathscr{P}$ is the set of probability distributions on $\Phi$. 
The inequality
\[
\log_2 \asympsubrank(\Phi) \leq \max_{P \in \mathscr{P}} \min_i H(P_i),
\]
follows from using \eqref{eqa} and using the support functionals as upper bound on the asymptotic subrank of tensors. Thus, the CW method is optimal whenever it can be applied.

\cref{cw} extends the CW method from $k = 3$ to higher-order tensors, that is,~$k\geq 4$. Contrary to the situation for $k=3$, the lower bound produced by \cref{cw} is not known to be tight.

\subsubsection{Type tensors}\label{sec:type}
As an investigation of the power of the higher-order CW method (\cref{cw}) and of the power of the support functionals (\cref{Qub}) we study the asymptotic subrank of the following family of tensors and their support.
While we do not have any immediate ``application'' for these tensors, we feel that they provide enough structure to make progress while still showing interesting behaviour.

Let $\lambda \vdash k$ be an integer partition of $k$ with~$n$ nonzero parts. 
Recall the definition of the $k$-graph~$\Phi_\lambda$ from \cref{intro:result}.
We define the tensor $T_\lambda$ as the $k$-tensor with support $\Phi_\lambda$ and all nonzero coefficients equal to 1, that is,
\[
T_\lambda \coloneqq \sum_{\mathclap{s \in \Phi_\lambda}} e_{s_1} \otimes \cdots \otimes e_{s_k} \in (\FF^{n})^{\otimes k}.
\]
In general, it follows from \eqref{eqa} and evaluating the right-hand side of \eqref{zetaub} for $a=T_\lambda$ and the uniform~$\theta = (1/k, \ldots, 1/k)$ that
\[
\asympsubrank(\Phi_\lambda) \leq \asympsubrank(T_\lambda) \leq 2^{H(\lambda/k)}.
\]
It was shown in \cite{christandl2016asymptotic} %
that 
\[
\asympsubrank(\Phi_{(k-1, 1)}) = \asympsubrank(T_{(k-1, 1)}) = 2^{H((1-1/k, 1/k))}
\]
for every $k \in \NN_{\geq 3}$ using \cref{cw}. (The same result was essentially obtained in~\cite{Haviv2017}.) In~\cite{christandl2016asymptotic} it was moreover shown that 
\[
\asympsubrank(\Phi_{(2, 2)}) = \asympsubrank(T_{(2, 2)}) = 2
\]
using \cref{cw}. %
As mentioned before, our main result (\cref{main}) is that for any large enough even~$k \in \NN_{\geq2}$ holds
\begin{equation}\label{eq:mainrepeat}
\asympsubrank(\Phi_{(k/2, k/2)}) = \asympsubrank(T_{(k/2, k/2)}) = 2.
\end{equation}
We conjecture that \eqref{eq:mainrepeat} holds for all even $k \in \NN$. We numerically verified this up to $k \leq 2000$. More generally we conjecture that $\asympsubrank(\Phi_\lambda) = \asympsubrank(T_\lambda) = 2^{H(\lambda/k)}$ holds for all partitions $\lambda\vdash k$, where~$H$ denotes the Shannon entropy and $\lambda/k$ denotes the probability vector $(\lambda_1/k, \ldots, \lambda_n/k)$.

In quantum information theory, the tensors $T_{(m, n)}$, when normalized, correspond to so-called Dicke states (see \cite{dicke1954coherence, stockton2003characterizing, vrana2015asymptotic}, and, e.g., \cite{bartschi2019deterministic}). Namely, in quantum information language, Dicke states are $(m+n)$-partite pure quantum states given by
\[
D_{(m,n)} \coloneqq \frac{1}{\sqrt{\binom{m+n}{m}}}T_{(m, n)} = \frac{1}{\sqrt{(m+n)!}} \sum_{\pi \in S_{m+n}} \pi \bigl(\ket{0}^{\otimes m} \otimes  \ket{1}^{\otimes n}\bigr)
\]
where the sum is over all permutations $\pi$ of the $k = m+n$ parties.
Roughly speaking, our result, \cref{main}, amounts to an asymptotically optimal~$k$-party stochastic local operations and classical communication (slocc) protocol for the problem of distilling GHZ-type entanglement from a subfamily of the Dicke states. More precisely, letting $\GHZ = \tfrac{1}{\sqrt{2}}(\ket{0}^{\otimes k} + \ket{1}^{\otimes k})$ be the $k$-party GHZ state, \cref{main} says that for $k$ large enough the maximal rate $\beta$ such that $n$ copies of $D_{(k/2,k/2)}$ can be transformed via slocc to $\beta n - o(n)$ copies of $\GHZ$ equals 1 when $n$ goes to infinity, that is,
\[
(D_{(k/2,k/2)})^{\otimes n} \xrightarrow{\textnormal{slocc}} \GHZ^{\otimes n - o(n)}
\]
and this rate is optimal.

\section{Reduction to counting}\label{counting}

We now begin working towards the proof of \cref{main}. The goal of this section is to reduce  \cref{main} to \cref{cl1} by applying \cref{cw}. %

\begin{lemma}\label{red}
\cref{cl1} implies \cref{main}.
\end{lemma}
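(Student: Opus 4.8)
The plan is to instantiate the higher-order CW method (\cref{cw}) for $\Phi=\Phi_{(k/2,k/2)}$, using the injective maps $\alpha_1,\dots,\alpha_k$ described right after \cref{cw} (for which $\alpha(x)-\alpha(y)=x-y$), and to lower bound the right-hand side of \eqref{lb} by evaluating it at the \emph{uniform} probability distribution $P$ on $\Phi_{(k/2,k/2)}$. It is convenient to identify $[2]^{\times k}$ with $\FF_2^k$ via $s\mapsto s-(1,\dots,1)$, which turns $\Phi_{(k/2,k/2)}$ into the set of Hamming-weight-$k/2$ vectors of $\FF_2^k$ and is compatible with differences. Since every coordinate of a uniformly random type-$(k/2,k/2)$ string is uniform on $\{1,2\}$, we get $H(P_i)=1$ for all $i$ and $H(P)=\log_2\binom{k}{k/2}$; the elementary identity $\binom{k}{k/2}=2\binom{k-1}{k/2}$ rewrites this as $H(P)=1+\log_2\binom{k-1}{k/2}$, which is the bookkeeping that will make the final arithmetic close. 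The goal is then to show that the bracket in \eqref{lb} at this $P$ is at least $1$.

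Fix $R\in\mathscr R$. Since $R$ is not contained in the diagonal we have $r(R)\ge1$, so there is no division by zero, and if $\mathscr Q_{R,(P_1,\dots,P_k)}$ is empty the corresponding term is vacuous and may be dropped. The first step is the crude bound $\max_{Q}H(Q)\le\log_2\abs{R}$. The second step reinterprets $R$ in the language of \cref{cl1}: let $V\subseteq\FF_2^k$ be the $\FF_2$-span of the mod-$2$ reductions of the integer vectors $\{x-y:(x,y)\in R\}$. By the definition of $\mathscr R$ there is a coordinate $i$ with $x_i=y_i$ for all $(x,y)\in R$; using the permutation symmetry of $\Phi_{(k/2,k/2)}$ — which preserves $r(R)$, $\abs R$, and (because all $P_i$ coincide) the value $\max_QH(Q)$ — I may assume $i=k$, so $V\subseteq\{z\in\FF_2^k:z_k=0\}$. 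The one genuinely non-formal point is the inequality $\dim_{\FF_2}V\le r(R)$: the $\ZZ$-span $\Lambda$ of $\{x-y:(x,y)\in R\}$ is a free abelian group of rank $r(R)$, and the reduction map $\ZZ^k\to\FF_2^k$ factors through $\Lambda/2\Lambda\cong(\ZZ/2\ZZ)^{r(R)}$, whose image is $V$. Since mod-$2$ reduction injects $R$ into $\{(x,y)\in\FF_2^k\times\FF_2^k:\abs x=\abs y=k/2,\ x-y\in V\}$, \cref{cl1} now gives $\abs R\le\binom{k-1}{k/2}^{\dim_{\FF_2}(V)/(k-2)+1}\le\binom{k-1}{k/2}^{r(R)/(k-2)+1}$, the last step because $\binom{k-1}{k/2}\ge1$ makes the right-hand side nondecreasing in $\dim_{\FF_2}V$.

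Combining the two steps with $H(P)=1+\log_2\binom{k-1}{k/2}$, a one-line computation gives, for every such $R$,
\[
\frac{\max_{Q}H(Q)-H(P)}{r(R)}\ \le\ \frac{1}{k-2}\log_2\binom{k-1}{k/2}-\frac{1}{r(R)}\ \le\ \frac{1}{k-2}\log_2\binom{k-1}{k/2},
\]
so the inner maximum over $\mathscr R$ in \eqref{lb} is at most $\tfrac{1}{k-2}\log_2\binom{k-1}{k/2}$. Substituting into \eqref{lb} yields $\log_2\asympsubrank(\Phi_{(k/2,k/2)})\ge H(P)-(k-2)\cdot\tfrac{1}{k-2}\log_2\binom{k-1}{k/2}=1$, i.e.\ $\asympsubrank(\Phi_{(k/2,k/2)})\ge2$. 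Since $\Phi_{(k/2,k/2)}\subseteq[2]^{\times k}$ forces the a priori bound $\asympsubrank(\Phi_{(k/2,k/2)})\le2$, equality follows, which is exactly \cref{main} (valid for every $k$ large enough for \cref{cl1} to apply).

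As for where the difficulty lies: within this reduction essentially nothing is hard. The only steps needing care are the permutation-symmetry normalization to $i=k$ and the inequality $\dim_{\FF_2}V\le r(R)$, both routine once noticed; everything else is entropy bookkeeping and the identity $\binom{k}{k/2}=2\binom{k-1}{k/2}$. All of the real work is pushed into \cref{cl1}, whose proof is split according to whether $\dim_{\FF_2}V$ is small or large and uses, respectively, elementary counting and Fourier-analytic tools (Krawtchouk polynomials, the KKL inequality); that — not the present reduction — is the main obstacle.
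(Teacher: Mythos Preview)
Your proof is correct and follows essentially the same route as the paper: instantiate \cref{cw} at the uniform $P$, bound $\max_Q H(Q)\le\log_2\abs{R}$, control $\abs{R}$ via \cref{cl1}, and close with $\dim_{\FF_2}V\le r(R)$ together with $\binom{k}{k/2}=2\binom{k-1}{k/2}$. Your application of \cref{cl1} is in fact slightly more direct than the paper's---you bound $\abs{R}$ immediately by the left-hand side of \eqref{lhsth14}, whereas the paper first passes through an auxiliary $R'\subseteq\Phi'^{\times 2}$ with $\abs{R}\le 2\abs{R'}$; both arrive at the same final inequality.
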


\begin{proof}%
We will use the higher-order CW method \cref{cw} to show that \cref{cl1} implies \cref{main}.
Let $\Phi = \Phi_{(k/2, k/2)} = \{ x \in \{0,1\}^k : \abs[0]{x} = k/2\}$.
Let $\alpha_1, \ldots, \alpha_{k-1}$ be the identity map $\ZZ\to\ZZ$ and let $\alpha_k: \ZZ\to\ZZ : x \mapsto x - k/2$. 
With this definition of $\alpha$ we have for all $a \in \Phi$ satisfied the condition $\sum_i \alpha_i(a_i) = 0$ from \cref{cw}.
As in the statement of \cref{cw}, for~$R \in \mathscr{R}$ let $r(R)$ be the dimension of the $\QQ$-vector space 
\[
\Span_\QQ \{\alpha(x) - \alpha(y) : (x,y) \in R\} = \Span_\QQ \{ x - y : (x,y) \in R\}.
\]
Let $P$ be the uniform distribution on $\Phi$.
Then \cref{cw} gives
\begin{align*}
\log_2 \asympsubrank(\Phi) &\geq H(P) - (k-2) \max_{R\in \mathscr{R}}\frac{\max_{Q\in\mathscr{Q}_{R, (P_1, \ldots, P_k)}} H(Q) - H(P)}{r(R)}\\
&= \log_2 \binom{k}{k/2} - (k-2) \max_{R\in\mathscr{R}} \frac{\max_{Q\in\mathscr{Q}_{R, (P_1, \ldots, P_k)}} H(Q) - \log_2 \binom{k}{k/2}}{r(R)},
\end{align*}
For any $Q\in \mathscr{Q}_{R, (P_1, \ldots, P_k)}$ we have that $H(Q)$ is at most the Shannon entropy of the uniform distribution on $R$. %
We thus obtain
\begin{equation}\label{qeq}
\log_2 \asympsubrank(\Phi) \geq \log_2 \binom{k}{k/2} - (k-2) \max_{R\in\mathscr{R}} \frac{\log_2 \abs[0]{R} - \log_2 \binom{k}{k/2}}{r(R)}.
\end{equation}

It remains to upper bound the maximisation over $R\in \mathscr{R}$ in \eqref{qeq}.
We define the set 
\[
\Phi' = \{ x \in \{0,1\}^{k-1} : \abs[0]{x} = k/2-1\}.
\]
For $R \in \mathscr{R}$ let $r_2(R)$ be the dimension of the $\FF_2$-vector space
\[
\Span_{\FF_2} \{\alpha(x) - \alpha(y) : (x,y) \in R\} = \Span_{\FF_2} \{ x - y : (x,y) \in R\}.
\]%
By assumption \cref{cl1} is true. %
This means
\begin{align*}
\forall R' \subseteq \Phi'^{\times 2} \quad \log_2 \abs[0]{R'} &\leq \Bigl( \frac{r_2(R')}{k-2} + 1 \Bigr) \log_2 \binom{k-1}{k/2-1}\\
&= \frac{r_2(R')}{k-2} \log_2 \binom{k-1}{k/2-1} + \log_2 \binom{k-1}{k/2-1}\\
&= \frac{r_2(R')}{k-2} \log_2 \binom{k-1}{k/2-1} + \log_2 \tfrac12 \binom{k}{k/2}
\end{align*}
that is
\begin{equation}\label{eq1}
\forall R' \subseteq \Phi'^{\times 2} \quad \log_2 (2 \abs[0]{R'}) \leq \frac{r_2(R')}{k-2} \log_2 \binom{k-1}{k/2-1} + \log_2 \binom{k}{k/2}.
\end{equation}
For any $R \in \mathscr{R}$ there
is a subset $R' \subseteq \Phi'^{\times 2}$ with $\abs[0]{R} \leq 2\abs[0]{R'}$ and $r_2(R) = r_2(R')$.
Namely, one constructs $R'$ as follows. Without loss of generality $\forall (x,y) \in R\colon x_1 = y_1$. For every $(x,y) \in R$, if $x_1 = y_1 = 1$, then add $((x_2, \ldots, x_k),(y_2, \ldots, y_k))$ to $R'$, and if $x_1 = y_1 = 0$, then add the negated tuple $((1,\ldots, 1) - (x_2, \ldots, x_k), (1,\ldots, 1) - (y_2, \ldots, y_k))$ to $R'$.
Therefore, \eqref{eq1} implies
\begin{align*}
\forall R\in \mathscr{R}\quad \log_2\abs[0]{R} &\leq \frac{r_2(R)}{k-2} \log_2 \binom{k-1}{k/2-1} + \log_2 \binom{k}{k/2}\\
& = \frac{r_2(R)}{k-2} \Bigl( \log_2 2 \binom{k-1}{k/2-1}^2 - \log_2 \binom{k}{k/2} \Bigr) + \log_2 \binom{k}{k/2}
\end{align*}
that is
\[
\forall R\in \mathscr{R}\quad \log_2 \abs[0]{R} - \log_2 \binom{k}{k/2} \leq \frac{r_2(R)}{k-2} \Bigl( \log_2 2 \binom{k-1}{k/2-1}^2 - \log_2 \binom{k}{k/2}\Bigr)
\]
that is
\begin{equation}\label{eq2}
\forall R\in\mathscr{R}\quad \frac{\log_2 \abs[0]{R} - \log_2 \binom{k}{k/2}}{r_2(R)} \leq \frac{ \log_2 2 \binom{k-1}{k/2-1}^2 - \log_2 \binom{k}{k/2}}{k-2}.
\end{equation}

Combining \eqref{eq2} with \eqref{qeq} and using $r_2(R) \leq r(R)$ gives
\begin{align*}
\log_2 \asympsubrank(\phi) &\geq \log_2 \binom{k}{k/2} -  \biggl(\log_2 2\binom{k-1}{k/2-1}^{\!\!2} - \log_2 \binom{k}{k/2}\biggr)\\
&= \log_2 2\binom{k-1}{k/2-1} - \log_2 2\binom{k-1}{k/2-1}^{\!\!2} + \log_2 2\binom{k-1}{k/2-1}\\
&= \log_2 \binom{k-1}{k/2-1} - 2\log_2 \binom{k-1}{k/2-1} + \log_2 \binom{k-1}{k/2-1} + 1\\
&= 1.
\end{align*}
This proves the lemma.
\end{proof}

\section{Case: low dimension}\label{largensmalldim}

To prove \cref{main} it remains to prove \cref{cl1}. Our proof of \cref{cl1} is divided into two cases. In this section we prove the low-dimensional case.

\begin{theorem}\label{thm:mainthmsmalln}
For any even $k \in \NN_{\geq4}$ and subspace $V \subseteq \{x \in \FF_2^k : x_k = 0\} \subseteq \FF_2^k$ such that $\dim_{\FF_2}(V) \leq 11k/12$, the inequality
\[
\abs[1]{\bigl\{ (x,y) \in \FF_2^k \times \FF_2^k : \abs[0]{x} = \abs[0]{y} = \tfrac{k}{2},\, x-y \in V \bigr\}} \leq \smash{\binom{k-1}{k/2}^{\!\frac{\dim_{\FF_2}\!(V)}{k-2} + 1}}
\]
holds. %
\end{theorem}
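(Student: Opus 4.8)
The plan is to turn the left‑hand side into a weighted sum over $V$, and then bound that sum by splitting the codewords of $V$ according to their Hamming weight. Fixing $z\in V$ and writing $x=y+z$, the pair $(x,y)$ is counted exactly when $y$ and $y+z$ both have weight $k/2$; this forces $|z|$ to be even, say $|z|=2\ell$, and then there are exactly $\binom{2\ell}{\ell}\binom{k-2\ell}{k/2-\ell}$ admissible $y$ (choose which $\ell$ of the $2\ell$ coordinates of $\supp(z)$ carry a $1$ of $y$, and fill the rest up to weight $k/2$). Hence the left‑hand side equals $\sum_{z\in V}w(z)$, where $w(z):=\binom{|z|}{|z|/2}\binom{k-|z|}{k/2-|z|/2}$ when $|z|$ is even and $w(z):=0$ otherwise. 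The hypothesis $z_k=0$ enters only to keep $|z|\le k-1$ and, after complementing the first $k-1$ coordinates of $z$ (which leaves $w$ unchanged and matches the binomial $\binom{k-1}{k/2}$ on the right), to make the low‑weight and high‑weight regimes symmetric.

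Next I would record two Stirling‑type estimates for $w$. First, $w(z)\le\binom{k}{k/2}$ always, since $w(z)$ is a single term of the Vandermonde identity $\sum_j\binom{2\ell}{j}\binom{k-2\ell}{k/2-j}=\binom{k}{k/2}$. Second, and crucially, for $z\ne 0$ one has $w(z)\le C\binom{k}{k/2}\big/\sqrt{\min(|z|,k-|z|)}$ for an absolute constant $C$ (and, in the low‑weight range, $w(z)\le C\binom{k}{k/2}/\sqrt{|z|/2+1}$ with $C$ close to $1$). In words: $w(z)/\binom{k}{k/2}$ is the modal probability of a hypergeometric distribution, equal to $1$ only at the endpoints $|z|\in\{0,k\}$ and decaying like the inverse square root of the weight away from them, so $w(z)$ is close to $\binom{k}{k/2}$ only for $z$ of weight near $0$ or near $k$.

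Now fix a constant $\delta\in(0,\tfrac12)$ and split $\sum_{z\in V}w(z)$ into the "bulk" sum over $z$ with $\delta k\le|z|\le(1-\delta)k$ and the "tail" sum over $z$ with $|z|<\delta k$ or $|z|>(1-\delta)k$. For the bulk, the decay estimate gives $w(z)\le C\binom{k}{k/2}/\sqrt{\delta k}$, and there are at most $|V|=2^{d}$ terms, so the bulk contributes at most $2^{d}\binom{k}{k/2}\cdot C/\sqrt{\delta k}$ — a factor $\asymp\sqrt{k}$ below the trivial $2^{d}\binom{k}{k/2}$, which is exactly the gain one needs when $d$ is as large as $\tfrac{11}{12}k$. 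For the tail, each half is controlled by a count of low‑weight vectors: $|\{z\in V:|z|<\delta k\}|\le\min\!\big(2^{d},\sum_{i<\delta k}\binom{k-1}{i}\big)\le\min(2^{d},2^{(k-1)H(\delta)})$, and the high‑weight half is bounded the same way. When $d\gtrsim kH(\delta)$ the bound $2^{(k-1)H(\delta)}$ already beats the target, so on the tail the crude $w(z)\le\binom{k}{k/2}$ suffices — this is where one needs $H(\delta)<\tfrac{11}{12}$, which forces $\delta$ small and is the origin of the constant $\tfrac{11}{12}$. When $d\lesssim kH(\delta)$ the count $2^{d}$ on the tail is not enough on its own, and one instead feeds the decay estimate into the tail sum, $\sum_{z\in V,\ |z|<\delta k}w(z)\le C\binom{k}{k/2}\sum_{z\in V}1/\sqrt{|z|/2+1}$, and decomposes the latter dyadically over weight scales $[2^{j},2^{j+1})$, using the scale‑by‑scale bound $|\{z\in V:|z|<2^{j}\}|\le\min\!\big(2^{d},\sum_{i<2^{j}}\binom{k-1}{i}\big)$; the dominant scale is the one at which $\sum_{i<2^{j}}\binom{k-1}{i}\approx 2^{d}$, which again buys back a factor $\asymp 1/\sqrt{k}$ up to polylogarithmic losses. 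Adding the bulk and tail contributions and comparing with the right‑hand side then closes the estimate.

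The hard part is the tail. A subspace of dimension $d\le\tfrac{11}{12}k$ can genuinely contain a huge number of low‑weight codewords — for instance all vectors supported on a fixed set of $d$ coordinates, or the span of $d$ disjoint weight‑$2$ vectors — each contributing almost the full $\binom{k}{k/2}$, so one must quantitatively trade the number of weight‑$\le w$ codewords against the $1/\sqrt{w}$ decay of $w$ and do so uniformly in $d$. Making that balance close, and in particular pinning down the largest admissible $\delta$, hence the constant $\tfrac{11}{12}$, is the delicate numerical point; the fact that the balance degrades as $d\to k$ is precisely why the complementary range $d>\tfrac{11}{12}k$ must be treated by the separate Fourier‑analytic argument (Krawtchouk polynomials and the KKL inequality) used for \cref{cl1} in that regime.
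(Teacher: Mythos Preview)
Your plan is essentially the paper's: both write the count as $\sum_{z\in V}w(|z|)$ with $w$ as you define it, establish the $\asymp 1/\sqrt{m}$ decay of $w$, and split at a weight threshold, bounding the low-weight part via the trivial count $a_m\le\binom{k-1}{m}$ and the high-weight part via $|V|=2^d$ times the decayed value of $w$. The only organizational difference is that the paper chooses a \emph{single} threshold $s$ adaptively in $d$ (namely $s=2$ for $d\lesssim k/\log k$ and $s\approx k/\log^2 k$ for $d$ up to $11k/12$), and exploits the exact identity $\binom{k}{m}f(k,m)=\binom{k/2}{m/2}^2\binom{k-1}{k/2}$ to control the low-weight sum cleanly, whereas you fix a constant $\delta$ and propose a dyadic decomposition of the tail; carried through, your dominant dyadic scale is precisely the paper's adaptive $s$, so the two arguments coincide once the bookkeeping is done.
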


We set up some notation.
Let $k\in 2\mathbb{N}$ and $\Phi=\{x\in\mathbb{F}_2^k\mid \abs[0]{x}=k/2\}$. %
We will think of $\mathbb{F}_2^{k-1}$ as the subspace where the last component is $0$.
We want to prove:		
		for any $V\le\mathbb{F}_2^{k-1}\le\mathbb{F}_2^k$ the inequality
		\begin{equation}\label{eq:desired}
		|R|\le\binom{k-1}{k/2}^{\frac{r}{k-2}+1}
		\end{equation}
		holds for all $r\leq \frac{11k}{12}$, where $R=\{(x,y)\in\Phi^2\mid x-y\in V,\, x_k=y_k=0\}$ and $r=\dim_{\mathbb{F}_2}V$.
The proof is divided into three claims. The first claim is trivial:

\begin{claim}
Inequality \eqref{eq:desired} holds when $r = 0$.
\end{claim}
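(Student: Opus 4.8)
The plan is to evaluate both sides of \eqref{eq:desired} directly. When $r=\dim_{\FF_2}V=0$ we have $V=\{0\}$, so the condition $x-y\in V$ appearing in the definition of $R$ collapses to $x=y$. Hence
\[
R=\{(x,x)\mid x\in\Phi,\ x_k=0\},
\]
and therefore $\abs[0]{R}$ is exactly the number of $x\in\FF_2^k$ with $\abs[0]{x}=k/2$ and $x_k=0$. Such an $x$ is precisely a weight-$k/2$ binary string supported on the first $k-1$ coordinates, so $\abs[0]{R}=\binom{k-1}{k/2}$.

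On the right-hand side, substituting $r=0$ gives $\binom{k-1}{k/2}^{0/(k-2)+1}=\binom{k-1}{k/2}$. Thus the two sides coincide and \eqref{eq:desired} holds, in fact with equality. There is no real obstacle in this case; the only point worth recording is that the bound \eqref{eq:desired} is already tight at $r=0$, which makes this the natural base case for the inductive/structural arguments used to handle larger values of $r$ in the remaining claims.
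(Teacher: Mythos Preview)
Your proof is correct and is exactly the direct verification the paper has in mind; the paper's own proof is the single sentence ``One verifies directly that \eqref{eq:desired} becomes an equality when $r=0$.'' The only quibble is your closing remark: the remaining claims do not proceed by induction on $r$ from this base case but rather handle separate ranges of $r$ by independent estimates, so calling this a ``base case'' slightly misdescribes its role.
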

\begin{proof}
One verifies directly that \eqref{eq:desired} becomes an equality when $r = 0$.
\end{proof}

	We prepare to deal with $r \geq 2$.
	Without loss of generality, we may assume that every vector in~$V$ has even weight.
To upper bound $\abs[0]{R}$ we introduce the function
	\begin{equation}
	f(k,m)=\begin{cases}
	\binom{m}{m/2}\binom{k-m-1}{(k-m)/2} & \text{if $m$ is even and $0\le m\le k-2$}  \\
	0 & \text{otherwise}
	\end{cases}
	\end{equation}
	which counts the number of pairs $(x,y)\in\Phi^2$ such that $x-y$ is an arbitrary but  fixed vector with  Hamming weight $m$. %
	This function has the following properties.
	
	\begin{proposition}\label{props}
	\leavevmode
		\begin{enumerate}
			\item For any even $0<m<k$ holds $f(k,m)=f(k,k-m)$.
			\item $f(k,m)$ strictly decreases in $m$ for even $0\le m\le k/2$.
			\item $f(k,0)=\binom{k-1}{k/2-1}=\binom{k-1}{k/2}$.
			\item $f(k,0)\ge f(k,k-2)=f(k,2)\ge f(k,k-4)=f(k,4)\ge\cdots.$

		\end{enumerate}
	\end{proposition}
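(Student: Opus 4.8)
The plan is to establish all four parts by direct manipulation of binomial coefficients, starting from the closed form
\[
f(k,m)=\binom{m}{m/2}\binom{k-m-1}{(k-m)/2}
\]
for even $0\le m\le k-2$ (and $0$ otherwise). For part (1), when $0<m<k$ with $m$ even, both $f(k,m)$ and $f(k,k-m)$ are defined by the first branch; I would write $f(k,k-m)=\binom{k-m}{(k-m)/2}\binom{m-1}{m/2}$ and then use the absorption identities $\binom{m}{m/2}=\frac{2m}{m}\binom{m-1}{m/2}$... more simply, $\binom{m}{m/2}=\binom{m-1}{m/2-1}+\binom{m-1}{m/2}=2\binom{m-1}{m/2}$ (by symmetry of the two terms) and likewise $\binom{k-m}{(k-m)/2}=2\binom{k-m-1}{(k-m)/2}$. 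Hence
\[
f(k,m)=\binom{m}{m/2}\binom{k-m-1}{(k-m)/2}=\tfrac12\binom{m}{m/2}\binom{k-m}{(k-m)/2}=\binom{m-1}{m/2}\binom{k-m}{(k-m)/2}=f(k,k-m).
\]

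For part (2), I want to show $f(k,m+2)<f(k,m)$ for even $0\le m\le k/2-2$ (so that both sides use the first branch, since $m+2\le k/2\le k-2$). Taking the ratio,
\[
\frac{f(k,m+2)}{f(k,m)}=\frac{\binom{m+2}{m/2+1}}{\binom{m}{m/2}}\cdot\frac{\binom{k-m-3}{(k-m)/2-1}}{\binom{k-m-1}{(k-m)/2}}.
\]
The first factor equals $\frac{(m+2)(m+1)}{(m/2+1)^2}$ and the second equals $\frac{(k-m)/2\,\cdot\,((k-m)/2-1)}{(k-m-1)(k-m-2)}$; I would simplify these to $\frac{(m+2)(m+1)}{(m/2+1)^2} = \frac{4(m+1)}{m+2}$ and $\frac{(k-m)/2\cdot((k-m)/2-1)}{(k-m-1)(k-m-2)} = \frac{k-m}{4(k-m-1)}$, so the ratio is $\frac{(m+1)(k-m)}{(m+2)(k-m-1)}$. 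Since $m<k/2$ gives $m+1\le k-m-1$, hence $(m+1)(k-m)\le (k-m-1)(m+2)$ is what needs checking: expanding, this is $mk-m^2+k-m \le mk-m^2+2k-2m-2$, i.e. $k - m \le 2k - 2m - 2$, i.e. $m + 2 \le k$, which holds strictly in our range; and one should double-check the inequality is strict, which it is since $m+2<k$. This gives strict decrease. Part (3) is immediate: $f(k,0)=\binom{0}{0}\binom{k-1}{k/2}=\binom{k-1}{k/2}=\binom{k-1}{k/2-1}$ by symmetry of binomial coefficients.

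Part (4) follows by combining (1) and (2): the chain $f(k,0)\ge f(k,k-2)\ge f(k,k-4)\ge\cdots$ rewrites via (1) as $f(k,0)\ge f(k,2)\ge f(k,4)\ge\cdots$, and each step $f(k,2j)\ge f(k,2j+2)$ for $2j+2\le k/2$ is (2); for the middle terms near $k/2$ one uses (1) to fold the large-$m$ values back to small-$m$ values, where (2) applies, and the equalities $f(k,k-2j)=f(k,2j)$ are exactly (1). The only genuinely delicate point — and the step I expect to require the most care — is handling the parity of $k/2$ and the exact behavior at the "fold point" $m=k/2$: if $k/2$ is odd then $f(k,k/2)=0$ (the first branch requires $m$ even), so the $m$-even subsequence skips over $k/2$ and the monotonicity chain in (4) must be read as comparing $f(k,\lfloor k/4\rfloor\cdot 2)$-type boundary terms correctly; if $k/2$ is even, $f(k,k/2)$ is the unique minimum and (1) identifies it with itself. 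I would state part (2) carefully as strict decrease on even $m\in[0,k/2]$ (interpreting $f(k,k/2)$ as the last term when $k/2$ is even, or as the value just before the parity gap otherwise) so that part (4) is a clean corollary, and flag the parity case split explicitly in the write-up.
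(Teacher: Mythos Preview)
Your approach is essentially identical to the paper's: verify (3) directly, prove (1) via the identity $\binom{2j}{j}=2\binom{2j-1}{j}$, prove (2) by computing the ratio of consecutive terms and showing it is less than $1$ (the paper computes the reciprocal and shows it exceeds $1$), and deduce (4) from (1) and (2).

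There is, however, an arithmetic slip in your expansion for part (2). You write $(m+2)(k-m-1)=mk-m^2+2k-2m-2$, but the correct expansion is $mk-m^2+2k-3m-2$ (the contributions $-2m$ from $2\cdot(-m)$ and $-m$ from $m\cdot(-1)$ combine to $-3m$). With this correction the inequality $(m+1)(k-m)<(m+2)(k-m-1)$ reduces to $2m+2<k$, i.e.\ $m\le k/2-2$ for even $m$, which is exactly the range you need. Your stated equivalent condition ``$m+2<k$'' is false: for $k=8$, $m=4$ one has $(m+1)(k-m)=20>18=(m+2)(k-m-1)$ even though $m+2=6<8=k$. Once the expansion is fixed the argument goes through; the paper arrives at the same ratio $\tfrac{(m+1)(k-m)}{(m+2)(k-m-1)}$ and the same condition $m\le k/2-2$. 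The parity discussion at the end is harmless but unnecessary, since the chain in (4) only uses the comparisons $f(k,2j)>f(k,2j+2)$ for $2j\le k/2-2$ together with the equalities from (1).
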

	\begin{proof}
Claim (3) one verifies directly.
For (1) we verify that
\begin{align*}
f(k,k-m) &= \binom{k-m}{(k-m)/2}\binom{m-1}{m/2-1}\\
&= 2\binom{k-m-1}{(k-m)/2-1} \frac12\binom{m}{m/2}\\
&= f(k,m).%
\end{align*}
For (2) %
we verify that
\begin{align*}
\frac{f(k,m)}{f(k,m+2)} &= \frac{\binom{m}{m/2}\binom{k-m}{(k-m)/2}}{\binom{m+2}{(m+2)/2} \binom{k-m-2}{(k-m-2)/2}}\\
&=\slfrac{\displaystyle\frac{m!}{(\frac{m}{2}!)^2}\frac{(k-m)!}{(\frac{k-m}{2}!)^2}}{\displaystyle\frac{(m+2)!}{(\frac{m+2}{2}!)^2} \frac{(k-m-2)!}{(\frac{k-m-2}{2}!)^2 }}\\
&= \frac{(k-m)(k-m-1)}{(m+1)(m+2)}\frac{(\frac{m}{2}+1)^2}{(\frac{k-m}{2})^2}\\
&= \frac{m+2}{m+1}\,\frac{k-m-1}{k-m},
\end{align*}
which is $> 1$ when $(m+2)(k-m-1) > (m+1)(k-m)$, that is, when $k/2-2 \geq m$. %
Claim (4) follows from (1) and (2).
\end{proof}

	Using the definition of $f(k,m)$, we can write $|R|$ in \eqref{eq:desired} as follows: suppose $V$ has $a_m$ vectors of weight $m$, then
	\begin{equation}
	|R|=\sum_{m=0}^{k-1}a_m f(k,m).
	\end{equation}
	To get an upper bound on $|R|$, we fix some even $s\in \{2,\ldots,k/2\}$ and in the terms with $f(k,m)>f(k,s)$ we replace $a_m$ by $\binom{k-1}{m}$, while in the remaining terms we replace $f(k,m)$ by~$f(k,s)$. This gives, using \cref{props} (4),
	\begin{equation}
	\begin{split}
	|R|
	& \le f(k,0)+\sum_{\substack{m=2  \\ \text{$m$ even}}}^{s-2}\left[\binom{k-1}{k-m}+\binom{k-1}{m}\right]f(k,m)+f(k,s)\sum_{m=s}^{k-s}a_m  \\
	& \le f(k,0)+\sum_{\substack{m=2  \\ \text{$m$ even}}}^{s-2}\left[\binom{k-1}{m-1}+\binom{k-1}{m}\right]f(k,m)+2^rf(k,s)  \\
	& = \sum_{\substack{m=0  \\ \text{$m$ even}}}^{s-2}\binom{k}{m}f(k,m)+2^rf(k,s). %
	\end{split}
	\end{equation}
	Now our goal is to understand for which values of $k,r,s$ the inequality 
	\begin{align}
	\label{eq:upperboundukrs}
\sum_{\substack{m=0  \\ \text{$m$ even}}}^{s-2}\binom{k}{m}f(k,m)+2^rf(k,s) \le\binom{k-1}{k/2}^{\frac{r}{k-2}+1}
\end{align} holds. In particular, if for every $k$ and $r \leq 11k/12$, there exists such an $s$, then~\eqref{eq:desired} and hence~\cref{thm:mainthmsmalln} holds.
	
	First we replace \eqref{eq:upperboundukrs} by a stronger but simpler inequality. Divide both sides of~\eqref{eq:upperboundukrs} by $\binom{k-1}{k/2-1}$ and bound the right-hand side %
	from below as follows
	\begin{equation}\label{eq:binomialpower}
	2^r\left(\frac{\pi(k+1)}{2}\right)^{-\frac{r}{2(k-2)}} \leq \left(\frac{2^{k-1}}{\sqrt{\pi(k+1)/2}}\right)^{\frac{r}{k-2}}  \leq \binom{k-1}{k/2-1}^{\frac{r}{k-2}}.
	\end{equation}
Thus \eqref{eq:upperboundukrs} is implied by
	\begin{align}
	\label{eq:firstsimplification}
	\sum_{\substack{m=0  \\ \text{$m$ even}}}^{s-2}\frac{\binom{k}{m}f(k,m)}{\binom{k-1}{k/2-1}}+	\frac{2^r f(k,s)}{\binom{k-1}{k/2-1}} \leq  2^r\left(\frac{\pi(k+1)}{2}\right)^{-\frac{r}{2(k-2)}}
	\end{align}

	\begin{claim}
		Inequality~\eqref{eq:desired} holds for every $k\geq 27$, and 
		$
		r\in \{2,\ldots,\frac{k}{2\log k} \}.
		$
	\end{claim}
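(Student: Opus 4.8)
The plan is to apply the reduction set up above with the smallest admissible value $s=2$: since $k\ge27$ we have $2\in\{2,\dots,k/2\}$, so to prove \eqref{eq:desired} for a given $k$ and $r$ it is enough to establish \eqref{eq:firstsimplification} for this $s$. Recall that we may assume every vector of $V$ has even weight. With $s=2$ the sum over $m$ on the left-hand side of \eqref{eq:firstsimplification} consists of the single term $m=0$, which equals $\binom{k}{0}f(k,0)/\binom{k-1}{k/2-1}=1$ by item~(3) of \cref{props}; and the remaining term, using $f(k,2)=\binom{2}{1}\binom{k-3}{(k-2)/2}=2\binom{k-3}{k/2-1}$ and the identity $\binom{k-3}{k/2-1}/\binom{k-1}{k/2-1}=\tfrac{(k/2)(k/2-1)}{(k-1)(k-2)}=\tfrac{k}{4(k-1)}$, equals $2^{r-1}\tfrac{k}{k-1}$. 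Hence \eqref{eq:firstsimplification} — and therefore \eqref{eq:desired} — reduces for $s=2$ to the single elementary inequality
\begin{equation}\label{eq:s2reduced}
1+2^{r-1}\,\frac{k}{k-1}\ \le\ 2^{r}\left(\frac{\pi(k+1)}{2}\right)^{-\frac{r}{2(k-2)}}.
\end{equation}

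To prove \eqref{eq:s2reduced} I would divide by $2^{r-1}$, rewrite it as
\[
\left(\frac{\pi(k+1)}{2}\right)^{-\frac{r}{2(k-2)}}\ \ge\ \frac12\left(\frac{k}{k-1}+2^{1-r}\right),
\]
and take base-$2$ logarithms, turning the goal into
\[
\frac{r}{2(k-2)}\,\log_2\!\left(\frac{\pi(k+1)}{2}\right)\ \le\ 1-\log_2\!\left(\frac{k}{k-1}+2^{1-r}\right).
\]
On the right I would use $r\ge2$ (hence $2^{1-r}\le\tfrac12$) and the fact that $\tfrac{k}{k-1}$ decreases in $k$ (hence $\tfrac{k}{k-1}\le\tfrac{27}{26}$ for $k\ge27$), which yields the uniform lower bound $1-\log_2\!\bigl(\tfrac{27}{26}+\tfrac12\bigr)$, a number exceeding $0.37$. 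On the left I would invoke the hypothesis $r\le\tfrac{k}{2\log k}$: the left-hand side is then at most $\tfrac{k}{4(k-2)\log k}\log_2\!\bigl(\tfrac{\pi(k+1)}{2}\bigr)$, which, writing $\log=\log_2$ as elsewhere in the paper, equals $\tfrac14\cdot\tfrac{k}{k-2}\cdot\tfrac{\log_2(\pi(k+1)/2)}{\log_2 k}$ — a product of two positive quantities each visibly decreasing in $k$, hence maximised over $k\ge27$ at $k=27$, where it evaluates to roughly $0.31$. Since $0.31<0.37$, the inequality \eqref{eq:s2reduced} holds for every $k\ge27$ and every $r\in\{2,\dots,\tfrac{k}{2\log k}\}$, which is the claim.

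The estimates above are all routine; the only point requiring care — and the step I would treat as the main (minor) obstacle — is ensuring that the two numerical bounds remain genuinely separated all the way down to the first admissible modulus $k=27$. Concretely this means verifying the monotonicity-in-$k$ assertions that reduce the left-hand side to its value at $k=27$ and then checking the resulting numerical inequality with a comfortable margin. It is worth noting that the cutoff $\tfrac{k}{2\log k}$ is used only through this single crude bound on the exponent, and no finer interplay between $r$ and $k$ is needed: at $s=2$ both sides of \eqref{eq:s2reduced} equal, apart from the additive $1$ on the left, a bounded multiple of $2^{r}$, and the multiplier on the right stays above the multiplier on the left over the whole range.
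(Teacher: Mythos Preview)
Your proof is correct and follows essentially the same approach as the paper: both set $s=2$, compute the left-hand side of \eqref{eq:firstsimplification} as $1+2^{r-1}\tfrac{k}{k-1}$, and then reduce the resulting inequality to an elementary bound comparing a constant below $1$ to $\bigl(\tfrac{\pi(k+1)}{2}\bigr)^{-r/(2(k-2))}$. The only cosmetic difference is that the paper rearranges to an explicit upper bound on $r$ and checks it exceeds $\tfrac{k}{2\log k}$ via the intermediate constants $13/10$ and $5/3$, whereas you bound the two sides numerically at $k=27$ and invoke monotonicity; both arguments are equally routine.
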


	\begin{proof}
		Let $s = 2$.
		The left-hand side of \eqref{eq:firstsimplification} equals
		\begin{equation}
		1+2^r\cdot 2\frac{\binom{k-3}{(k-2)/2}}{\binom{k-1}{k/2}}=1+2^r\frac{1}{2}\frac{k}{k-1}.
		\end{equation}
		Since $2^{-r}\le\frac{1}{4}$, we see that \eqref{eq:firstsimplification} is implied by %
		\begin{equation}
		\frac{1}{4}+\frac{1}{2}\frac{k}{k-1}\le\left(\frac{\pi(k+1)}{2}\right)^{-\frac{r}{2(k-2)}}.
		\end{equation}
		This is equivalent to 
\begin{equation}\label{eq111}
		r\leq 2(k-2)\frac{\log(\frac{1}{1/4+k/(2(k-1))})}{\log(\pi/2 \cdot (k+1))}.
\end{equation}
		We use that for $k$ large enough holds $\frac{1}{1/4+k/(2(k-1))}\geq 13/10$, %
		$2(k-2)\geq \frac{5}{3}k$, %
		and
\[
		\log(\pi/2 \cdot (k+1))\cdot \frac{3}{5} \cdot \frac{1}{\log(13/10)}\leq 2\log(k)
\]
to see that the right-hand side of \eqref{eq111} is at least $k/(2\log k)$.
	\end{proof}

	We now further simplify the left-hand side of~\eqref{eq:firstsimplification} via
	\begin{equation}
		\label{eq:upperboundterm1}
		\begin{split}
\frac{\binom{k}{m}f(k,m)}{\binom{k-1}{k/2-1}} &= \frac{\binom{k}{m}\binom{m}{m/2}\binom{k-m-1}{(k-m)/2}}{\binom{k-1}{k/2-1}}\\ & = \frac{k!m!(k-m-1)!(k/2-1)!(k/2)!}{m!(k-m)!\left(\frac{m}{2}!\right)^2\frac{k-m}{2}!\left(\frac{k-m}{2}-1\right)!(k-1)!}  \\
	& = \frac{k(k/2-1)!}{(k-m)\frac{m}{2}!\left(\frac{k-m}{2}-1\right)!}\binom{k/2}{m/2}  \\
	& = \frac{\frac{k}{2}(k/2-1)!}{\frac{k-m}{2}\frac{m}{2}!\left(\frac{k-m}{2}-1\right)!}\binom{k/2}{m/2}=\binom{k/2}{m/2}^2
	\end{split}
	\end{equation}
	and
	\begin{equation}
	\label{eq:upperboundterm2}
	\begin{split}
\frac{f(k,s)}{\binom{k-1}{k/2-1}} &= \binom{s}{s/2}\frac{\binom{k-s-1}{(k-s)/2}}{\binom{k-1}{k/2-1}}\\
  & = \binom{s}{s/2}\frac{(k-s-1)!\left(\frac{k}{2}-1\right)!\frac{k}{2}!}{\frac{k-s}{2}!\left(\frac{k-s}{2}-1\right)!(k-1)!}  \\
	& = 2^{-s}\binom{s}{s/2}\frac{\displaystyle 2^{s/2}\frac{\frac{k}{2}!}{\frac{k-s}{2}!}}{\displaystyle 2^{-s/2}\frac{(k-1)!}{(k-s-1)!}\frac{\left(\frac{k-s}{2}-1\right)!}{\left(\frac{k}{2}-1\right)!}}  \\
	& = 2^{-s}\binom{s}{s/2}\prod_{i=0}^{s/2-1}\frac{k-2i}{k-2i-1}  = 2^{-s}\binom{s}{s/2}\prod_{i=0}^{s/2-1}\left(1+\frac{1}{k-2i-1}\right).
	\end{split}
	\end{equation}
	We have the upper bound $\binom{s}{s/2}\le2^s\sqrt{\frac{2}{\pi s}}$. In the product of $s/2$ terms, each term is at least~$1$ and the largest term is the last one. Since $s\le k/2$, we can use $k-s-1\ge k/2-1$ to get 
	\begin{equation}
	1\le\prod_{i=0}^{s/2-1}\left(1+\frac{1}{k-2i-1}\right)\le\left(1+\frac{1}{k-s-1}\right)^{s/2}\le\left(1+\frac{1}{k/2-1}\right)^{k/4}\le 2
	\end{equation}
	for all $k\ge 4$. 
	Plugging in~\eqref{eq:upperboundterm1},\eqref{eq:upperboundterm2} into~\eqref{eq:firstsimplification}, we see that \eqref{eq:upperboundukrs} is implied by
	\begin{equation}
	\sum_{\substack{m=0  \\ \text{$m$ even}}}^{s-2}\binom{k/2}{m/2}^2+ 2^r\sqrt{\frac{8}{\pi s}}\le 2^r\left(\frac{\pi(k+1)}{2}\right)^{-\frac{r}{2(k-2)}},
	\end{equation}
	that is, \eqref{eq:upperboundukrs} is implied by 
	\begin{equation}\label{eq:secondsimplification}
	2^{-r}\sum_{\substack{m=0  \\ \text{$m$ even}}}^{s-2}\binom{k/2}{m/2}^2+\sqrt{\frac{8}{\pi s}}\le \left(\frac{\pi(k+1)}{2}\right)^{-\frac{r}{2(k-2)}}.
	\end{equation}
To further upper bound the left-hand side of~\eqref{eq:secondsimplification} we use the following lemma, which we will prove later.
	\begin{lemma}\label{lem:sumratio}
		For any even $k$ and $2\le s\le k/2$ the following inequality holds:
		\begin{equation}
		\frac{\displaystyle\sum_{\substack{m=0  \\ \text{$m$ even}}}^{s}\binom{k/2}{m/2}^2}{\displaystyle\sum_{\substack{m=0  \\  \text{$m$ even}}}^s\binom{k}{m}}\le \frac{4}{\sqrt{\pi}}\cdot\sqrt{\frac{k}{s(k-s)}}
		\end{equation}
	\end{lemma}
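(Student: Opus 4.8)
The plan is to bound the ratio termwise, i.e.\ to find a single constant $c$ and show that $\binom{k/2}{m/2}^2 \le c\,\binom{k}{m}$ for every even $m$ in the range $0 \le m \le s \le k/2$, with the constant $c$ absorbing the factor $\tfrac{4}{\sqrt{\pi}}\sqrt{k/(s(k-s))}$ only after summing. Actually a cleaner route is the following: compute the exact ratio of consecutive terms in the numerator and denominator sums and show that the numerator is dominated by a geometric-type tail, so that both sums are (up to constants) governed by their largest term, which occurs at $m = s$. Concretely, I would first write
\[
\frac{\binom{k/2}{m/2}^2}{\binom{k}{m}} = \frac{\bigl(\tfrac{k}{2}\bigr)!^2\, m!\,(k-m)!}{\bigl(\tfrac{m}{2}\bigr)!^2 \bigl(\tfrac{k-m}{2}\bigr)!^2\, k!} = \frac{\binom{m}{m/2}\binom{k-m}{(k-m)/2}}{\binom{k}{k/2}},
\]
using the standard identity $\binom{k/2}{m/2}^2 / \binom{k}{m} = \binom{m}{m/2}\binom{k-m}{(k-m)/2}/\binom{k}{k/2}$ (this is just $f(k,m)/f(k,0)$ in the notation already set up, hence $\le 1$ by \cref{props}). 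So each numerator term is at most the corresponding denominator term, giving ratio $\le 1$ trivially; the work is to gain the extra factor $\sqrt{k/(s(k-s))}$, which is genuinely smaller than $1$.

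To get that improvement I would use the central binomial estimate $\binom{2a}{a} \le 2^{2a}/\sqrt{\pi a}$ applied to $\binom{m}{m/2}$ and $\binom{k-m}{(k-m)/2}$, and the matching lower bound $\binom{k}{k/2} \ge 2^k/\sqrt{2k}$ (or $\ge 2^{k-1}/\sqrt{k}$), to obtain
\[
\frac{\binom{k/2}{m/2}^2}{\binom{k}{m}} \;\le\; \frac{2^m/\sqrt{\pi m/2}\cdot 2^{k-m}/\sqrt{\pi(k-m)/2}}{2^{k}/\sqrt{2k}} \;=\; \frac{2}{\sqrt{\pi}}\sqrt{\frac{k}{m(k-m)}},
\]
valid for $m \ge 2$, while for $m=0$ the ratio is exactly $1/\binom{k}{0}=1$. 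Since $m \mapsto m(k-m)$ is increasing on $[0,k/2]$ and $m \le s \le k/2$, we have $m(k-m) \ge \max(1,\; m(k-s)/s \cdot s) $— more carefully, for $m$ in the relevant range $\sqrt{k/(m(k-m))}$ is decreasing in nothing uniform, so instead I would sum the bound directly: $\sum_{m \text{ even}, 2\le m\le s} \sqrt{1/(m(k-m))} \le \tfrac{1}{\sqrt{k-s}}\sum_{m\text{ even},\,2\le m\le s}\tfrac{1}{\sqrt m} \le \tfrac{1}{\sqrt{k-s}}\cdot \sqrt{2s}$ (comparing the sum over even $m$ to an integral, $\sum_{j=1}^{s/2} 1/\sqrt{2j} \le \sqrt{2}\sqrt{s/2} = \sqrt s$). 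Combining with the $m=0$ term and the denominator lower bound $\sum_{m\text{ even}}^s \binom{k}{m} \ge \binom{k}{k/2}$ (wait—this is false when $s < k/2$), I would instead keep the denominator as is and note $\sum_{m\text{ even}}^s\binom{k}{m}\ge 1$; then assemble the constant, checking that $1 + \tfrac{2}{\sqrt\pi}\sqrt{2s}/\sqrt{k-s} \le \tfrac{4}{\sqrt\pi}\sqrt{k/(s(k-s))}\cdot\sum\binom{k}{m}$, which holds comfortably since the right side is at least $\tfrac{4}{\sqrt\pi}\sqrt{k/(s(k-s))}$ and $s \le k/2$ makes $\sqrt{k/(s(k-s))}\ge \sqrt{2/s}$.

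The main obstacle I anticipate is getting the constants to line up cleanly: the naive termwise bound loses a $\sqrt 2$ here and a $2$ there, and one must be careful about the $m=0$ term (which contributes $1$, not something small) and about whether to lower-bound the denominator sum by its largest term $\binom{k}{s}$ or just by $1$. Using $\binom{k}{s}$ as the denominator lower bound and $2^k/\sqrt{2k}$-type estimates on the numerator should give a ratio bound whose leading term is $O(\sqrt{k/(s(k-s))})$ with a constant below $4/\sqrt\pi$; the remaining slack then absorbs the $m=0$ term. If the direct summation is too lossy, the fallback is to observe that the numerator terms $\binom{k/2}{m/2}^2$ themselves satisfy $\binom{k/2}{(m+2)/2}^2/\binom{k/2}{m/2}^2 = \bigl(\tfrac{k/2-m/2}{m/2+1}\bigr)^2$, which for $m \le s \le k/2$ stays bounded, so the numerator sum is at most a constant times its last term $\binom{k/2}{s/2}^2$; dividing by $\binom{k}{s}$ and applying central-binomial estimates once more then yields the claimed bound with room to spare. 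I expect the first (direct termwise-then-sum) approach to succeed and to be shorter.
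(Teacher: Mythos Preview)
Your termwise estimate $\binom{k/2}{m/2}^2/\binom{k}{m}\le C\sqrt{k/(m(k-m))}$ is correct and is exactly the starting point of the paper's proof too (via Stirling, with $C=\sqrt{2/\pi}$). The difficulty is in assembling this into a bound on the ratio of sums. Your ``direct'' route is not coherent as written: summing $\sum_m\sqrt{1/(m(k-m))}$ drops the factor $\binom{k}{m}$ from the termwise inequality and so does not bound the numerator; if instead you keep $\binom{k}{m}$, the ratio of sums becomes a weighted average of the termwise ratios, and the maximum of those (at $m=0$ or $m=2$) is of constant order, far larger than the target $\sqrt{k/(s(k-s))}$.

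Your fallback---bound the numerator sum by a constant times its last term $\binom{k/2}{s/2}^2$ and the denominator from below by $\binom{k}{s}$---does work when $s$ is bounded away from $k/2$, and this is precisely one half of the paper's argument. But when $s$ is close to $k/2$ the consecutive-term ratio $\bigl((k-m)/(m+2)\bigr)^2$ is only $1+O(1/k)$ near $m=s$, so ``at most a constant times the last term'' is false: for $s=k/2$ the numerator sum is $\sim\tfrac12\binom{k}{k/2}\sim 2^k/\sqrt{k}$ while the last term $\binom{k/2}{k/4}^2\sim 2^k/k$, a factor $\sqrt{k}$ smaller. Combined with the denominator lower bound $\binom{k}{s}\sim 2^k/\sqrt{k}$ (itself a $\sqrt{k}$ underestimate of the true denominator $\sim 2^{k-2}$), the fallback yields only an $O(1)$ bound on the ratio, whereas the target $\tfrac{4}{\sqrt\pi}\sqrt{k/(s(k-s))}$ is $\Theta(1/\sqrt{k})$ in this regime. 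The paper closes this gap by treating the range $c_2k\le s\le k/2$ separately: it splits the numerator sum at $m\approx c_1k<c_2k$, uses the crude bound $\binom{k/2}{m/2}^2\le\binom{k}{m}$ on the small-$m$ piece and the termwise bound on the large-$m$ piece, and then compares each piece against the corresponding portion of the denominator.
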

\begin{remark}
Numerics suggest that the optimal constant in the above inequality is $\sqrt{2/\pi}$ instead of $4/\sqrt{\pi}$.
\end{remark}

Assuming that $r$ satisfies
\begin{equation}
\label{eq:assumptionbetweenrands}
\sum_{\substack{m=0  \\  \text{$m$ even}}}^{s-2}\binom{k}{m}\le 2^r
\end{equation}
we have
\begin{align}
\label{eq:upperboundonlhsofsecondassumption}
\begin{aligned}
2^{-r}\sum_{\substack{m=0  \\ \text{$m$ even}}}^{s-2}\binom{k/2}{m/2}^2+\sqrt{\frac{8}{\pi s}} 
&\leq 2^{-r}\frac{4}{\sqrt{\pi}}\cdot\sqrt{\frac{k}{s(k-s)}}\displaystyle\sum_{\substack{m=0  \\  \text{$m$ even}}}^s\binom{k}{m}+\sqrt{\frac{8}{\pi s}}\\
&\leq \frac{4}{\sqrt{\pi}}\cdot\sqrt{\frac{k}{s(k-s)}}+\sqrt{\frac{8}{\pi s}} \\
&\leq \frac{4}{\sqrt{\pi}}\cdot\sqrt{\frac{2}{s}}+\sqrt{\frac{8}{\pi s}} = 3\sqrt{\frac{8}{\pi s}},
\end{aligned}
\end{align}
where the first inequality used Lemma~\ref{lem:sumratio}, the second inequality used~\eqref{eq:assumptionbetweenrands}, and the third inequality used $\frac{k}{k-s}\le 2$ (which holds, since $s\leq k/2$). 
Thus, assuming \eqref{eq:assumptionbetweenrands}, we have that \eqref{eq:secondsimplification} is implied by	
	\begin{equation}
	3\sqrt{\frac{8}{\pi s}}\le\left(\frac{\pi(k+1)}{2}\right)^{-\frac{r}{2(k-2)}}.
	\end{equation}
	In other words, if there is an $s\ge 24\ge\frac{72}{\pi}=22.9183...$ such that
	\begin{equation}
	\label{eq:thirdsimplification}
	\log\sum_{\substack{m=0  \\  \text{$m$ even}}}^{s-2}\binom{k}{m}\le r\le(k-2)\frac{\log s-\log\frac{72}{\pi}}{\log(k+1)+\log\frac{\pi}{2}},
	\end{equation}
	then~\eqref{eq:secondsimplification} holds. %
	We further upper bound the left-hand side of~\eqref{eq:thirdsimplification} by
	\begin{equation}
	\log\sum_{\substack{m=0  \\  \text{$m$ even}}}^{s-2}\binom{k}{m}\le\log\sum_{m=0}^{s-2}\binom{k}{m}\le k h\Bigl(\frac{s}{k}\Bigr).
	\end{equation}
	Hence \eqref{eq:thirdsimplification} is implied by
	\begin{align}
	\label{eq:fourthsimplification}
k h\Bigl(\frac{s}{k}\Bigr)\le r\le(k-2)\frac{\log s-\log\frac{72}{\pi}}{\log(k+1)+\log\frac{\pi}{2}}.
	\end{align}

\begin{claim}
Inequality~\eqref{eq:desired} holds for $k$ large enough and every $r\in \{\frac{k}{2\log k},\ldots,11k/12\}$.
\end{claim}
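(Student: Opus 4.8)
By the reductions made above, culminating in~\eqref{eq:fourthsimplification}, it suffices to show that for $k$ large enough and every integer $r$ with $\tfrac{k}{2\log k}\le r\le\tfrac{11k}{12}$ there is an even integer $s$ with $24\le s\le k/2$ satisfying~\eqref{eq:fourthsimplification}. Write $h^{-1}\colon[0,1]\to[0,\tfrac12]$ for the inverse of the restriction of the binary entropy $h$ to $[0,\tfrac12]$. For $0\le s\le k/2$, the first inequality of~\eqref{eq:fourthsimplification} is equivalent to $s\le k\,h^{-1}(r/k)$ (as $h$ is strictly increasing on $[0,\tfrac12]$), and the second is equivalent, after exponentiating, to $s\ge\tfrac{72}{\pi}\bigl(\tfrac{\pi(k+1)}{2}\bigr)^{r/(k-2)}$; moreover $r/k\le\tfrac{11}{12}<1$ forces $h^{-1}(r/k)<\tfrac12$, so the constraint $s\le k/2$ is already implied by the first inequality. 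Hence the goal is to place an even integer in the interval $\bigl[\max\!\bigl(24,\tfrac{72}{\pi}(\tfrac{\pi(k+1)}{2})^{r/(k-2)}\bigr),\,k\,h^{-1}(r/k)\bigr]$.

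I would take $s$ to be the largest even integer with $s\le k\,h^{-1}(r/k)$; then $s>k\,h^{-1}(r/k)-2$, and it remains only to check
\begin{equation*}
k\,h^{-1}(r/k)\ \ge\ \max\!\Bigl(24,\ \tfrac{72}{\pi}\bigl(\tfrac{\pi(k+1)}{2}\bigr)^{r/(k-2)}\Bigr)+2
\end{equation*}
for all $r$ in the stated range and all large $k$: this makes $s>\max(24,\cdot)$, so $s\ge24$ and $s$ exceeds $\tfrac{72}{\pi}(\tfrac{\pi(k+1)}{2})^{r/(k-2)}$, which is exactly both inequalities of~\eqref{eq:fourthsimplification}.

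For the right-hand side I would use that $\tfrac{72}{\pi}(\tfrac{\pi(k+1)}{2})^{r/(k-2)}$ is increasing in $r$ and that for $k\ge112$ the exponent obeys $\tfrac{r}{k-2}\le\tfrac{11k}{12(k-2)}\le\tfrac{14}{15}$; combined with $\tfrac{\pi(k+1)}{2}\le\pi k$ this yields $\tfrac{72}{\pi}(\tfrac{\pi(k+1)}{2})^{r/(k-2)}\le 72\,k^{14/15}$, so the right-hand side above is at most $73\,k^{14/15}$ for $k\ge3$. For the left-hand side I would use monotonicity, $h^{-1}(r/k)\ge h^{-1}(\tfrac1{2\log k})$, together with the crude bound $h^{-1}(\tfrac1{2\log k})\ge k^{-1/20}$ for $k$ large. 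The latter follows from the standard estimate $h(x)\le x\log_2(e/x)$, an easy consequence of $\ln u\le u-1$: taking $x=k^{-1/20}$ gives $h(k^{-1/20})\le k^{-1/20}(\log_2 e+\tfrac1{20}\log_2 k)=O(k^{-1/20}\log k)$, which is $\le\tfrac1{2\log k}$ for $k$ large, and applying $h^{-1}$ yields $k^{-1/20}\le h^{-1}(\tfrac1{2\log k})$. Thus the left-hand side is at least $k^{19/20}$, and since $\tfrac{19}{20}>\tfrac{14}{15}$ we get $k^{19/20}\ge 73\,k^{14/15}$ for all large $k$, which completes the proof.

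The single place where a quantitative estimate really enters is the lower bound on $h^{-1}$ at the small argument $\tfrac1{2\log k}$, and the very weak polynomial bound above is comfortably enough: $k\,h^{-1}(r/k)$ is of size $k^{1-o(1)}$ while $\tfrac{72}{\pi}(\tfrac{\pi(k+1)}{2})^{r/(k-2)}$ never exceeds $k^{11/12+o(1)}$ on the range $r\le\tfrac{11k}{12}$, a gap of $k^{1/12-o(1)}$; this is precisely the role of the restriction $r\le\tfrac{11k}{12}$. Using instead the sharper asymptotics $h^{-1}(\varepsilon)=\Theta(\varepsilon/\log(1/\varepsilon))$ as $\varepsilon\to0$ would only bring the implied threshold on $k$ down to a reasonable size and is not needed here.
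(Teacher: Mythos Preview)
Your proof is correct and follows essentially the same route as the paper: both verify \eqref{eq:fourthsimplification} by exhibiting an $s$ for which the resulting admissible interval for $r$ contains $[\tfrac{k}{2\log k},\tfrac{11k}{12}]$, exploiting the gap between $k\,h^{-1}(r/k)=k^{1-o(1)}$ and $\tfrac{72}{\pi}(\tfrac{\pi(k+1)}{2})^{r/(k-2)}\le k^{11/12+o(1)}$. The only cosmetic difference is that the paper fixes a single explicit $s\approx k/\log^2 k$ (via $s=2\lfloor k^\beta/2\rfloor$ with $\beta=1-\tfrac{2\log\log k}{\log k}$) and checks the two endpoints directly, whereas you take $s$ to be the largest even integer below $k\,h^{-1}(r/k)$ and use the cruder bound $h^{-1}(\tfrac{1}{2\log k})\ge k^{-1/20}$; the underlying idea is identical.
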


\begin{proof}
	Use the bound of~\eqref{eq:fourthsimplification} with $s=2\lfloor k^\beta/2\rfloor$ to get %
	that
	inequality~\eqref{eq:desired} holds for $\beta \in (0,1)$, $k\geq \max\{24^{1/\beta},2^{1/(1-\beta)}\}$, and 
\begin{equation}\label{lem:largen}
		kh(k^{\beta-1})\leq r\leq (k-2)\frac{\log(k^\beta-2)-\log\frac{72}{\pi}}{\log(k+1)+\log \frac{\pi}{2}}.
\end{equation}
Fix $\beta=1-\frac{2\log \log k}{\log k}$. For this choice of $\beta$, we have $k\geq 24^{1/\beta}$ for every $k\geq 3500$ and clearly~$k\geq 2^{1/(1-\beta)}$ for every $k\geq3$, thereby satisfying the requirements for \eqref{lem:largen}. Now observe that
		\begin{align}
		\label{eq:UBonkh()}
		kh(k^{\beta-1})=	kh\Big(\frac{1}{\log^2 k}\Big)\leq \frac{4k}{\log^2 k}\log \log k \leq \frac{k}{2\log k},
		\end{align} 
		where the first inequality uses the fact that for every $x\in (0,1/2]$ holds $h(x)\leq 2 x\log \frac{1}{x}$, and the second inequality holds for every~$k\geq 13\cdot 10^{12}$.
				 Next, for $k$ large enough
\begin{equation}
\begin{split}
		(k-2)\frac{\log(k^\beta-2)-\log\frac{72}{\pi}}{\log(k+1)+\log \frac{\pi}{2}} &\geq 	(k-2)\frac{\log(k^\beta-2)-\log\frac{72}{\pi}}{\log(2k)}\geq (k-2)\frac{\log(k^\beta-2)-{5}}{\log(2k)}\\ &\geq (k-2)\frac{\log k^\beta-{6}}{\log(2k)}.
\end{split}
\end{equation}
		For very large $k$, observe that
		\begin{align}
		\label{eq:LBonfrac}
		(k-2)\frac{\log k^\beta-{6}}{\log(2k)} %
		\geq \frac{11k}{12}.
		\end{align}
		Putting together \eqref{eq:LBonfrac} and \eqref{eq:UBonkh()} along with \eqref{lem:largen}, we prove the claim.
	\end{proof}

	\begin{proof}[\bfseries\upshape Proof of Lemma~\ref{lem:sumratio}]
	We will make use of the following variant of Stirling's formula (due to Robbins \cite{MR0069328}), valid for all positive integers $n$:
	\begin{equation}
	\sqrt{2\pi n}\frac{n^n}{e^n}e^{\frac{1}{12n+1}}<n!<\sqrt{2\pi n}\frac{n^n}{e^n}e^{\frac{1}{12n}}.
	\end{equation}
	First we bound the ratio of the individual terms (assuming $m\neq 0$) as
	\begin{equation}\label{eq:termratio}
	\begin{split}
	\frac{\displaystyle\binom{k/2}{m/2}^2}{\displaystyle\binom{k}{m}}
	& = \frac{\frac{k}{2}!^2m!(k-m)!}{\frac{m}{2}!^2\frac{k-m}{2}!^2k!}  \\
	& \le \frac{1}{\sqrt{2\pi}}\sqrt{\frac{\frac{k^2}{4}m(k-m)}{\frac{m^2}{4}\frac{(k-m)^2}{4}k}}\frac{\left(\frac{k}{2}\right)^km^m(k-m)^{k-m}}{\left(\frac{m}{2}\right)^m\left(\frac{k-m}{2}\right)^{k-m}k^k}  \\
	& \cdot\exp\left\{\frac{1}{3k}+\frac{1}{12m}+\frac{1}{12(k-m)}-\frac{2}{6m+1}-\frac{2}{6(k-m)+1}-\frac{1}{12k+1}\right\}  \\
	& \le \sqrt{\frac{2}{\pi}}\sqrt{\frac{k}{m(k-m)}},
	\end{split}
	\end{equation}
	since the third factor is $1$ and the argument of the exponential is negative if $2\le m\le\frac{k}{2}$.
	
	Now let us turn to the ratio of the sums. Let $0<c_1<2c_1<c_2<\frac{1}{2}$ be fixed constants. Assume first that $2\le s\le c_2k$. The denominator can be bounded from below by its last term, while the numerator can be bounded from above as
	\begin{align}
	\begin{aligned}
	\sum_{\substack{m=0  \\ \text{$m$ even}}}^{s}\binom{k/2}{m/2}^2 = \sum_{i=0}^{s/2}\binom{k/2}{i}^2   = \sum_{j=0}^{s/2}\binom{k/2}{s/2-j}^2  & \le \sum_{j=0}^{s/2}\binom{k/2}{s/2}^2\left(\frac{s}{k-s}\right)^{2j}  \\
	& \le \sum_{j=0}^{\infty}\binom{k/2}{s/2}^2\left(\frac{s}{k-s}\right)^{2j}  \\
	& = \binom{k/2}{s/2}^2\frac{(k-s)^2}{k(k-2s)}\\
	& \le \binom{k/2}{s/2}^2\frac{(1-c_2)^2}{1-2c_2},
	\end{aligned}
	\end{align}
	where in the first inequality we have used
	\begin{equation}
	\frac{\displaystyle\binom{k/2}{n}}{\displaystyle\binom{k/2}{n+1}}=\frac{n+1}{\frac{k}{2}-n}\le\frac{\frac{s}{2}-1+1}{\frac{k}{2}-\frac{s}{2}+1}\le\frac{s}{k-s}
	\end{equation}
	for $n+1\le s/2$. Combining with \eqref{eq:termratio} we arrive at the estimate
	\begin{equation}
	\frac{\displaystyle\sum_{\substack{m=0  \\ \text{$m$ even}}}^{s}\binom{k/2}{m/2}^2}{\displaystyle\sum_{\substack{m=0  \\  \text{$m$ even}}}^s\binom{k}{m}}\le \frac{1-c_2}{1-2c_2}\frac{\displaystyle\binom{k/2}{s/2}^2}{\displaystyle\binom{k}{s}}\le\frac{1-c_2}{1-2c_2}\sqrt{\frac{2}{\pi}}\sqrt{\frac{k}{s(k-s)}}.
	\end{equation}
	
	Now we turn to the case when $c_2k\le s\le k/2$. Split the sum in the numerator into two at~$m\approx c_1k$. For $m\le\lfloor c_1k\rfloor$ use the simple bound $\binom{k/2}{m/2}^2\le\binom{k}{m}$, while for $m\ge\lfloor c_1k\rfloor+1\ge c_1k$ use~\eqref{eq:termratio} to get
	\begin{equation}
	\frac{\displaystyle\binom{k/2}{m/2}^2}{\displaystyle\binom{k}{m}}\le\sqrt{\frac{2}{\pi}}\sqrt{\frac{k}{m(k-m)}}\le\sqrt{\frac{2}{\pi}}\frac{1}{\sqrt{k}}\frac{1}{\sqrt{c_1(1-c_1)}}.
	\end{equation}
	Introducing
	\begin{align}
	A  = \sum_{\substack{m=0  \\ \text{$m$ even}}}^{2\lfloor c_1k/2\rfloor}\binom{k}{m}, \qquad
	B  = \sum_{\substack{m=2\lfloor c_1k/2\rfloor+2  \\ \text{$m$ even}}}^{s}\binom{k}{m}.
	\end{align}
	The estimate
\begin{equation}
\begin{split}
	\frac{\displaystyle\sum_{\substack{m=0  \\ \text{$m$ even}}}^{s}\binom{k/2}{m/2}^2}{\displaystyle\sum_{\substack{m=0  \\  \text{$m$ even}}}^s\binom{k}{m}} &\le\frac{A+\sqrt{\frac{2}{\pi}}\frac{1}{\sqrt{k}}\frac{1}{\sqrt{c_1(1-c_1)}}B}{A+B}  =\frac{\sqrt{\frac{2}{\pi}}\frac{1}{\sqrt{k}}\frac{1}{\sqrt{c_1(1-c_1)}}+\frac{A}{B}}{1+\frac{A}{B}}  \\
	&\le \sqrt{\frac{2}{\pi}}\frac{1}{\sqrt{k}}\frac{1}{\sqrt{c_1(1-c_1)}}+\frac{A}{B}
\end{split}
\end{equation}
	follows.
	The ratio
	\begin{equation}
	\frac{\displaystyle\binom{k}{n}}{\displaystyle\binom{k}{n-1}}=\frac{k-n+1}{n}=\frac{k+1}{n}-1
	\end{equation}
	is monotonically decreasing in $n$, therefore, by induction
	\begin{equation}
	\frac{\displaystyle\binom{k}{b-t}}{\displaystyle\binom{k}{a-t}}\ge\frac{\displaystyle\binom{k}{b}}{\displaystyle\binom{k}{a}}
	\end{equation}
	whenever $a\le b$. Apply this with $a=2\lfloor c_1k/2\rfloor$, $b=s$ and $t=2\lfloor c_1k/2\rfloor-m$ to get
	\begin{equation}
	\begin{split}
	A = \sum_{\substack{m=0  \\ \text{$m$ even}}}^{2\lfloor c_1k/2\rfloor}\binom{k}{m}   
	= \sum_{\substack{m=0  \\ \text{$m$ even}}}^{2\lfloor c_1k/2\rfloor}\binom{k}{m+s-2\lfloor c_1k/2\rfloor}\frac{\binom{k}{m}}{\binom{k}{m+s-2\lfloor c_1k/2\rfloor}}\\
	 \le \sum_{\substack{m=0  \\ \text{$m$ even}}}^{2\lfloor c_1k/2\rfloor}\binom{k}{m+s-2\lfloor c_1k/2\rfloor}\frac{\binom{k}{2\lfloor c_1k/2\rfloor}}{\binom{k}{s}}  \\
	 = \frac{\binom{k}{2\lfloor c_1k/2\rfloor}}{\binom{k}{s}}\sum_{\substack{m=s-2\lfloor c_1k/2\rfloor  \\ \text{$m$ even}}}^{s}\binom{k}{m}\le\frac{\binom{k}{2\lfloor c_1k/2\rfloor}}{\binom{k}{s}}B,
	\end{split}
	\end{equation}
	that is,
	\begin{equation}
	\frac{A}{B}\le\frac{\binom{k}{2\lfloor c_1k/2\rfloor}}{\binom{k}{s}}\le 2^{k(h(c_1)-h(s/k))}\sqrt{8k\frac{s}{k}\left(1-\frac{s}{k}\right)}\le 2^{k(h(c_1)-h(c_2))}\sqrt{2k}.
	\end{equation}
	We now look for a constant $C$ that satisfies
	\begin{equation}
	\sqrt{\frac{2}{\pi}}\frac{1}{\sqrt{k}}\frac{1}{\sqrt{c_1(1-c_1)}}+2^{k(h(c_1)-h(c_2))}\sqrt{2k}\le C\cdot\sqrt{\frac{k}{s(k-s)}}
	\end{equation}
	when $c_2k\le s\le k/2$. Equivalently, we need
	\begin{equation}
	\sqrt{\frac{2}{\pi}}\sqrt{\frac{s}{k}\left(1-\frac{s}{k}\right)}\frac{1}{\sqrt{c_1(1-c_1)}}+\sqrt{2}\cdot 2^{k(h(c_1)-h(c_2))}k\sqrt{\frac{s}{k}\left(1-\frac{s}{k}\right)}\le C.
	\end{equation}
	Using $\sqrt{\frac{s}{k}\left(1-\frac{s}{k}\right)}\le\frac{1}{2}$ and that $2^{k(h(c_1)-h(c_2))}k$ has a global maximum at $k=\frac{1}{\ln 2}\frac{1}{h(c_2)-h(c_1)}$, an upper bound on the left-hand side is
	\begin{equation}
	\frac{1}{\sqrt{2\pi}}\frac{1}{\sqrt{c_1(1-c_1)}}+\frac{1}{\sqrt{2}e\ln 2}\frac{1}{h(c_2)-h(c_1)}.
	\end{equation}
	In particular, with $c_1=0.09711\ldots$ and $c_2=0.39252\ldots$ we get
	$C=2.25503\ldots<\frac{4}{\sqrt{\pi}}$.
\end{proof}

\section{Case:  high dimension}\label{largenlargedim}

Finally, in this section we consider the remaining high-dimensional case.
\begin{theorem}\label{largelargethm}\label{sj} %
For any large enough even $k \in \NN_{\geq4}$ and subspace $V \subseteq \{x \in \FF_2^k : x_k = 0\} \subseteq \FF_2^k$ such that $\dim_{\FF_2}(V) \geq 11(k-1)/12$, the inequality
\[
\abs[1]{\bigl\{ (x,y) \in \FF_2^k \times \FF_2^k : \abs[0]{x} = \abs[0]{y} = \tfrac{k}{2},\, x-y \in V \bigr\}} \leq \smash{\binom{k-1}{k/2}^{\!\frac{\dim_{\FF_2}\!(V)}{k-2} + 1}}
\]
holds. Here $\abs[0]{x}$ denotes the Hamming weight of $x \in \FF_2^k$.
\end{theorem}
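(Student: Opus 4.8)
The plan is to prove \cref{largelargethm} by Fourier analysis on $\FF_2^k$, turning the left-hand side into a sum of squared Krawtchouk values over the dual code $V^\perp$, and then exploiting that $\dim_{\FF_2}V^\perp = k - \dim_{\FF_2}V \le (k+11)/12$ is small. As in the proof of \cref{red}, I may assume that every vector of $V$ has even Hamming weight: the difference $x-y$ of two weight-$k/2$ vectors is always even, so passing to $V\cap\{\text{even weight}\}$ leaves the left-hand side unchanged and lowers $\dim_{\FF_2}V$ by at most one, which either preserves the hypothesis or lands in the range covered by \cref{thm:mainthmsmalln}. Writing $S = \{x\in\FF_2^k : \abs[0]{x}=k/2\}$, $r=\dim_{\FF_2}V$, and expanding the indicator of $V$ into characters,
\[
\abs[1]{\{(x,y)\in S\times S : x-y\in V\}}
= \frac{1}{2^{k-r}}\sum_{\chi\in V^\perp}\Bigl(\sum_{x\in S}(-1)^{\chi\cdot x}\Bigr)^{2}
= \frac{1}{2^{k-r}}\sum_{\chi\in V^\perp}K(\abs[0]{\chi})^2,
\]
where $K(w)=\sum_i(-1)^i\binom{w}{i}\binom{k-w}{k/2-i}=[z^{k/2}](1-z)^w(1+z)^{k-w}$ is the middle Krawtchouk polynomial. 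Since $k$ is even, $K(w)=0$ for odd $w$, so only even-weight characters contribute; these form a subspace $W$ of dimension $k-r-1\le (k-1)/12$ that contains $0$ and the all-ones vector $\mathbf 1$, with $K(0)^2=K(k)^2=\binom{k}{k/2}^2$ and $\abs[0]{K(w)}<\binom{k}{k/2}$ for $0<w<k$.

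The first step is to observe that the two principal characters $0,\mathbf 1\in W$ already account for essentially the whole bound: their contribution $2\binom{k}{k/2}^2/2^{k-r}$ matches the right-hand side $\binom{k-1}{k/2}^{r/(k-2)+1}$ up to a factor polynomial in $k$ (using $\binom{k}{k/2}=2\binom{k-1}{k/2}$ and $\log_2\binom{k-1}{k/2}<k-2$), and that polynomial slack shrinks to a subconstant factor as $r\to k-2$. So the task is to bound the remaining sum $\sum_{\chi\in W\setminus\{0,\mathbf 1\}}K(\abs[0]{\chi})^2$ against this slack.

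For that I would combine two ingredients. (i) A sharp envelope bound on $K(w)^2/\binom{k}{k/2}^2$ with super-exponential decay in $\min(w,k-w)$: for small $w$ a contour/saddle-point evaluation of $[z^{k/2}](1-z)^w(1+z)^{k-w}$ gives $K(w)^2\lesssim\binom{k}{k/2}^2\,(w/(ek))^{w}$ (in particular $K(2)^2\asymp\binom{k}{k/2}^2/k^2$), and the symmetry $K(w)=\pm K(k-w)$ transfers this to $w$ near $k$; in the bulk $\delta_0 k\le w\le(1-\delta_0)k$, Parseval's identity $\sum_w\binom kw K(w)^2=2^k\binom{k}{k/2}$ yields $K(w)^2\le 2^k\binom{k}{k/2}/\binom kw$, which is superpolynomially smaller than $\binom{k}{k/2}^2$. (ii) A bound on the weight enumerator $A_w(W)=\abs[0]{\{\chi\in W:\abs[0]{\chi}=w\}}$ of the low-dimensional code $W$; this is where the Kahn--Kalai--Linial inequality enters, to certify that a subspace of dimension at most $k/12$ cannot have too many vectors of any given small weight, quantitatively enough to supersede the too-crude bound $A_w(W)\le\min\{\binom kw,2^{\dim W}\}$. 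With (i) and (ii) in hand I would split the sum into three ranges: for $2\le w\le\delta_0 k$, and symmetrically $k-\delta_0 k\le w\le k-2$ (using $A_w(W)=A_{k-w}(W)$, valid because $\mathbf 1\in W$), the decay from (i) dominates the growth of the weight enumerator from (ii) precisely because $\dim_{\FF_2}W\le k/12$; and for the bulk $\delta_0 k\le w\le(1-\delta_0)k$, the Parseval bound together with $A_w(W)\le 2^{\dim W}$ makes the contribution exponentially negligible for a suitable constant $\delta_0>0$.

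The main obstacle is sharpness: the slack over the principal term is only polynomial in $k$ (it comes from the $\tfrac12\log_2 k$ deficit of $\log_2\binom{k}{k/2}$ below $k$), so both (i) and (ii) must be carried out with explicit constants, and the argument must be uniform over the whole range $11(k-1)/12\le r\le k-2$. This spans two genuinely different regimes: $r$ close to $k-2$, where $W$ has only a handful of nonzero characters (all of weight in $\{2,\dots,k-2\}$) and the $1/k^2$-scale decay of $K(2)$ must beat a vanishingly small slack, versus $r=11(k-1)/12$, where the codimension is linear and the KKL-based weight-enumerator bound does the real work. I expect the small-weight bookkeeping -- balancing $A_w(W)$ against $(w/(ek))^{w}$ and summing -- to be the technical heart of the argument.
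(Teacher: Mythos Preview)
Your plan is correct and matches the paper's proof almost exactly: Fourier expansion over $V^\perp$, isolation of the two principal characters $0$ and $\mathbf 1$, KKL to control the weight enumerator of the small-dimensional dual at extreme weights, and a crude bound in the bulk. The one technical device you are missing is that the paper first drops the frozen last coordinate and works in $\FF_2^{n}$ with $n=k-1$ odd, which gives access to the closed form $K^{n}_{(n-1)/2}(t)^2=\binom{n}{(n-1)/2}^{2}\binom{(n-1)/2}{\lfloor t/2\rfloor}^{2}\big/\binom{n}{t}^{2}$ (\cref{kraw}) and thereby replaces your proposed saddle-point envelope for $K(w)$ by exact binomial ratios handled elementarily (\cref{bounds}); the split point (at $t\approx\ln(2)c$), the KKL input, and the endgame comparison of the residual $f(n,c)$ against the polynomial slack in $\binom{n}{(n-1)/2}^{(1-c)/(n-1)}$ are exactly as you outline.
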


\subsection{Preliminaries}
Our proof of \cref{sj} uses Fourier analysis on the Boolean cube $\FF_2^n = \{0,1\}^n$, the Krawchouk polynomials, a consequence of the KKL inequality and some elementary bounds for expressions involving binomial coefficients.

\subsubsection{Fourier transform}

For $z\in \{0,1\}^n$ define the function $\chi_z : \{0,1\}^n \to \RR$ by $\chi_z(x) = (-1)^{z\cdot x}$ with $z\cdot x = \sum_i z_i x_i$. These so-called \defin{characters} form an orthonormal basis for the space of functions $\{0,1\}^n \to \RR$ for the inner product $\langle f,g\rangle = \frac{1}{2^n} \sum_x f(x) g(x)$.
For a function $f : \{0,1\}^n \to \RR$ define $\widehat{f} : \{0,1\}^n \to \RR$ by $\widehat{f}(z) = \langle f,\chi_z\rangle = \frac{1}{2^n}\sum_{x} f(x) \chi_z(x)$. The function $\widehat{f}$ is the \defin{Fourier transform of $f$}. 
One verifies that for any functions $f,g : \{0,1\}^n \to \RR$ we have the identity
\begin{equation}\label{conv}
\sum_{x,y} f(x) f(y) g(x + y) = 2^{2n} \sum_z \widehat{f}(z)^2 \widehat{g}(z)
\end{equation}
with sums over $x,y \in \{0,1\}^n$ and $z\in \{0,1\}^n$.

\subsubsection{Krawchouk polynomials}
For $0 \leq k \leq n$ define the function 
\[
K_k^n : \{0,1\}^n \to \RR
\]
as the sum of the characters~$\chi_z$ with $z \in \{0,1\}^n$ and $|z| = k$, that is 
\[
K_k^n(x) = \sum_{|z|=k} \chi_z(x).
\]
The function $K_k^n(x)$ depends only on the Hamming weight $|x|$ and can thus be interpreted as a function on integers $0 \leq t \leq n$. This function may be written as $K^n_k(t) = \sum_{j=0}^k (-1)^j \binom{t}{j} \binom{n-t}{k-j}$ and this defines a real polynomial of degree~$k$, called the \defin{$k$th Krawchouk polynomial}. We will use the following expression for the ``middle'' Krawchouk polynomial for odd $n$.

\begin{lemma}[Proposition 4.4 in \cite{feinsilver2016sums}]\label{kraw} Let $n$ be odd and $t\in\{0, \ldots, n\}$. Then
\[
K^n_{\frac{n-1}{2}}(t) =  %
(-1)^{\floor{t/2}} \binom{n}{(n-1)/2} \frac{\displaystyle\binom{(n-1)/2}{\floor{t/2}}}{\displaystyle \binom{n}{t}}. %
\]
\end{lemma}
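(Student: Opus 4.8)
The plan is to avoid computing the ``middle-in-degree'' coefficient $K^n_{(n-1)/2}(t)$ directly, and instead to transpose the roles of the degree and the argument of the Krawchouk polynomial, which turns the awkward middle coefficient into the value at the midpoint of a polynomial that factors completely. Throughout write $n=2m+1$, so that $m=(n-1)/2$; note $0\le t\le n$ and $0\le m\le n$, so every object below is well defined and $\floor{t/2}\le m$.

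First I would record the classical duality relation $\binom{n}{t}K^n_k(t)=\binom{n}{k}K^n_t(k)$, valid for all $0\le k,t\le n$. This is immediate from the definition $K^n_k(x)=\sum_{|z|=k}\chi_z(x)$: since $K^n_k(x)$ depends only on $|x|$, fixing any $a\in\{0,1\}^n$ with $|a|=t$ and summing over all such $a$ gives $\binom{n}{t}K^n_k(t)=\sum_{|a|=t}\sum_{|z|=k}(-1)^{z\cdot a}$, and the right-hand side is manifestly symmetric under exchanging the roles of $k$ and $t$. Applying the duality with $k=m$ yields
\[
K^n_m(t)=\frac{\binom{n}{m}}{\binom{n}{t}}\,K^n_t(m),
\]
so it remains only to evaluate $K^n_t(m)$, i.e.\ the $t$th Krawchouk polynomial at the midpoint.

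For that I would use the generating-function form of the Krawchouk polynomials, $\sum_{k}K^n_k(x)\,y^k=\prod_{i=1}^n\bigl(1+(-1)^{x_i}y\bigr)=(1-y)^{|x|}(1+y)^{n-|x|}$ (equivalently, resum $K^n_k(t)=\sum_j(-1)^j\binom{t}{j}\binom{n-t}{k-j}$), but now with the roles of index and weight swapped: $\sum_t K^n_t(m)\,y^t=(1-y)^m(1+y)^{\,n-m}=(1-y)^m(1+y)^{m+1}=(1+y)(1-y^2)^m$. Expanding the right-hand side as $\sum_{i\ge 0}(-1)^i\binom{m}{i}\bigl(y^{2i}+y^{2i+1}\bigr)$ and comparing the coefficient of $y^t$ gives $K^n_t(m)=(-1)^{\floor{t/2}}\binom{m}{\floor{t/2}}$ in both parities of $t$. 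Plugging this back into the displayed identity yields exactly the claimed formula.

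The argument is short, and I do not expect a genuine obstacle: the only points needing care are the one-line justification of the duality identity and the observation that $K^n_t$ is a legitimate polynomial for every $0\le t\le n$, so that evaluating it at $m$ is meaningful. The one mildly clever move is recognizing that, under the $k\leftrightarrow t$ transposition, the problematic middle coefficient becomes the value at the midpoint of a polynomial that factors as $(1+y)(1-y^2)^m$, which one can expand by inspection.
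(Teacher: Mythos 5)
Your argument is correct. Note that the paper does not actually prove this lemma: it imports it wholesale by citing Proposition~4.4 of Feinsilver--Fitzgerald, so there is no in-paper proof to compare against; what you have written is a complete, self-contained derivation of the cited identity. Your two ingredients both check out: the reciprocity relation $\binom{n}{t}K^n_k(t)=\binom{n}{k}K^n_t(k)$ follows exactly as you say from writing $\binom{n}{t}K^n_k(t)=\sum_{|a|=t}\sum_{|z|=k}(-1)^{z\cdot a}$ and exchanging the roles of $a$ and $z$ (and since $m$ is an integer weight, evaluating $K^n_t$ at $m$ needs no polynomial-extension caveat at all --- it is again just the character sum at a weight-$m$ point); and the generating function $\sum_t K^n_t(m)\,y^t=(1-y)^m(1+y)^{m+1}=(1+y)(1-y^2)^m$ gives $K^n_t(m)=(-1)^{\lfloor t/2\rfloor}\binom{m}{\lfloor t/2\rfloor}$ in both parities, with the parity bookkeeping correct (for $t=2i$ and $t=2i+1$ the index is $i=\lfloor t/2\rfloor$ in both cases, and $\lfloor t/2\rfloor\le m$ for all $t\le n=2m+1$). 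Substituting back yields precisely the stated formula, as a quick check at $n=3$ confirms ($K^3_1(t)=3-2t$ matches the right-hand side for $t=0,\dots,3$). The transposition trick --- turning the middle-degree coefficient into the midpoint value of a polynomial whose generating function factors as $(1+y)(1-y^2)^m$ --- is a clean and standard route, and it buys the paper independence from the external reference, which is a modest but genuine gain.
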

We will encounter the Krawchouk polynomials in the following way.
For any $0\leq k \leq n$ define the function $w^n_k : \{0,1\}^n \to \{0,1\}$ by $w^n_k(z) = [|z|=k]$. Then
\begin{equation}\label{what}
\widehat{w}^n_k(z) = \frac{1}{2^n}\sum_x w^n_k(x) (-1)^{x\cdot z} = \frac{1}{2^n} K_k^n(|z|).
\end{equation}

\subsubsection{KKL inequality}
Let $A\subseteq \{0,1\}^n$. The \defin{characteristic function} $f: \{0,1\}^n \to \{0,1\}$ of $A$ is defined by $f(x) = [x\in A]$. %
Now suppose $A$ is a linear subspace. Let $A^{\perp} \coloneqq \{y \in \{0,1\}^n : \textnormal{$y \cdot x = 0$ for all $x\in A$}\}$ be the orthogonal complement of $A$. The Fourier transform of $f$ is given by
\begin{equation}\label{ft}
\widehat{f}(z) = \frac{[z\in A^\perp]}{|A^\perp|}.
\end{equation}
Indeed, $\widehat{f}(z) = \tfrac{1}{2^n} \sum_{x\in A} (-1)^{x \cdot z}$ and, if $z \in A^\perp$, then this sum equals $\tfrac{1}{2^n} |A|$; if $z \not\in A^\perp$, say $x_0 \cdot z = 1$, then $\sum_{x\in A} (-1)^{x \cdot z} = \sum_{x \in A} (-1)^{(x + x_0)z} = (-1) \sum_{x\in A} (-1)^{x\cdot z}$ so the sum equals zero.

The following lemma is a consequence of the KKL inequality \cite{kahn1988influence} and can be found in~\cite{MR2847893}.

\begin{lemma}[KKL inequality]\label{kkl} Let $A \subseteq \{0,1\}^n$ be a non-empty subset. Let~$f$ be the characteristic function of $A$. Define $c= n - \log |A|$. For any integer $1\leq t \leq  \ln(2)c$ we have
\begin{align*}
\sum_{|z| = t} \widehat{f}(z)^2 &\leq \frac{1}{2^{2 c}} \biggl(\frac{2 e \ln (2) c}{t}\biggr)^t\\
\sum_{|z| = n-t} \widehat{f}(z)^2 &\leq \frac{1}{2^{2 c}} \biggl(\frac{2 e \ln (2) c}{t}\biggr)^t
\end{align*}
with sums over $z\in \{0,1\}^n$.
\end{lemma}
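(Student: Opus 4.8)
The plan is to derive \cref{kkl} from the standard level-$t$ Fourier-weight inequalities for Boolean functions, which themselves follow from hypercontractivity (the Bonami--Beckner inequality); the convolution identity \eqref{conv} is not needed here. Write $\alpha = |A|/2^n = 2^{-c}$, so that $\ln(1/\alpha) = (\ln 2)\,c$. Since $f$ takes values in $\{0,1\}$, for every $p>0$ we have $\|f\|_p^p = \frac{1}{2^n}\sum_x f(x)^p = \frac{1}{2^n}|A| = \alpha$, hence $\|f\|_p = \alpha^{1/p}$. For $\rho\in(0,1]$ let $T_\rho$ denote the noise operator, characterised by $\widehat{T_\rho f}(z) = \rho^{|z|}\widehat f(z)$; by hypercontractivity $\|T_\rho f\|_2 \le \|f\|_{1+\rho^2}$ for all real-valued functions on $\{0,1\}^n$.

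First I would bound the level-$t$ weight $W^t \coloneqq \sum_{|z|=t}\widehat f(z)^2$. Since all Fourier squares are nonnegative,
\[
\rho^{2t}\,W^t \;\le\; \sum_{z}\rho^{2|z|}\widehat f(z)^2 \;=\; \|T_\rho f\|_2^2 \;\le\; \|f\|_{1+\rho^2}^2 \;=\; \alpha^{2/(1+\rho^2)}.
\]
Using $\frac{2}{1+\rho^2} = 2 - \frac{2\rho^2}{1+\rho^2}$ and $0 \le \frac{2\rho^2}{1+\rho^2} \le 2\rho^2$ (as $1+\rho^2\ge 1$), and since $0<\alpha\le 1$, this gives $\alpha^{2/(1+\rho^2)} \le \alpha^2\,(1/\alpha)^{2\rho^2} = \alpha^2\,e^{2\rho^2\ln(1/\alpha)}$, so that $W^t \le \alpha^2\,\rho^{-2t}\,e^{2\rho^2\ln(1/\alpha)}$. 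Then I would optimise: the function $\rho\mapsto \rho^{-2t}e^{2\rho^2\ln(1/\alpha)}$ is minimised at $\rho^2 = t/(2\ln(1/\alpha)) = t/(2(\ln 2)c)$, which lies in $(0,1]$ precisely because of the hypothesis $t\le(\ln 2)c$ (it even gives $\rho^2\le 1/2$). Substituting yields $\rho^{-2t}e^{2\rho^2\ln(1/\alpha)} = \bigl(2e\ln(1/\alpha)/t\bigr)^t = \bigl(2e\ln(2)c/t\bigr)^t$, hence $W^t \le \alpha^2\bigl(2e\ln(2)c/t\bigr)^t = 2^{-2c}\bigl(2e\ln(2)c/t\bigr)^t$, which is the first inequality.

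For the second inequality, let $\mathbf 1\in\{0,1\}^n$ be the all-ones string and set $g(x) = \chi_{\mathbf 1}(x)f(x) = (-1)^{|x|}f(x)$. A direct computation from the definition of the Fourier transform gives $\widehat g(z) = \widehat f(z+\mathbf 1)$, and since $|z+\mathbf 1| = n-|z|$ we obtain $\sum_{|z|=n-t}\widehat f(z)^2 = \sum_{|z|=t}\widehat g(z)^2$. Now $g$ is real-valued with $|g| = f$, so $\|g\|_p^p = \alpha$ for all $p>0$, and both the identity $\widehat{T_\rho g}(z) = \rho^{|z|}\widehat g(z)$ and hypercontractivity $\|T_\rho g\|_2\le\|g\|_{1+\rho^2}$ hold verbatim; running the level-$t$ argument above with $g$ in place of $f$ therefore produces exactly the same bound, which is the second inequality.

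The only non-elementary ingredient is the hypercontractive inequality $\|T_\rho f\|_2\le\|f\|_{1+\rho^2}$ for $\rho\in[0,1]$; I would cite this from \cite{MR2847893} (it is the instance of the Bonami lemma underlying the KKL inequality of \cite{kahn1988influence}). The remaining steps — the $L^p$-norm computation for $\{0,1\}$-valued functions, the elementary manipulation of $\alpha^{2/(1+\rho^2)}$, and the one-variable optimisation producing exactly the constant $2e\ln(2)c/t$ — are routine, so I do not expect a genuine obstacle; the only point needing care is checking that the minimiser $\rho$ stays in $(0,1]$, which is precisely what the range restriction $t\le\ln(2)c$ guarantees.
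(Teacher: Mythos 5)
Your proposal is correct. Note, though, that the paper does not prove \cref{kkl} at all: it is quoted as a known consequence of the KKL machinery and attributed to the reference cited there, so there is no internal proof to compare against step by step. What you have written is the standard hypercontractive proof of the level-$t$ inequality, and it checks out in detail: with the paper's normalisation $\langle f,g\rangle = 2^{-n}\sum_x f(x)g(x)$ one has $\|f\|_{1+\rho^2}^{1+\rho^2} = \alpha$ for the $\{0,1\}$-valued $f$, Parseval gives $\rho^{2t}\sum_{|z|=t}\widehat f(z)^2 \le \|T_\rho f\|_2^2$, Bonami--Beckner gives $\|T_\rho f\|_2 \le \|f\|_{1+\rho^2}$, your estimate $\alpha^{2/(1+\rho^2)} \le \alpha^2 e^{2\rho^2\ln(1/\alpha)}$ is valid since $0<\alpha\le 1$, and the optimisation $\rho^2 = t/(2\ln(1/\alpha))$ lands in $(0,1]$ exactly because $t \le \ln(2)c = \ln(1/\alpha)$, yielding precisely $2^{-2c}\bigl(2e\ln(2)c/t\bigr)^t$. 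The reflection trick $g = \chi_{\mathbf 1} f$ for the level-$(n-t)$ bound is also sound, since hypercontractivity and the norm computation only use $|g| = f$. So your argument makes the paper self-contained on this point, modulo citing hypercontractivity, and it recovers the stated constant exactly; the only cosmetic remark is that the lemma's name notwithstanding, you are not invoking the KKL theorem itself but the same hypercontractive estimate that underlies it, which is entirely appropriate.
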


For any subset $A\subseteq \{0,1\}^n$ and integer $0\leq t \leq n$ we denote by $A_t$ the set of vectors in $A$ with Hamming weight $t$.

\begin{corollary}\label{kklcor}
Let $V \subseteq\{0,1\}^n$ be a subspace and define $c = n - \dim(V)$. For any integer $1\leq t\leq \ln(2)c$ we have the following upper bound on the number of vectors in $V^\perp$ with Hamming weight $t$ and $n-t$ respectively:
\begin{align*}
\Vperpt &\leq \biggl(\frac{2 e \ln (2) c}{t}\biggr)^t\\
\abs[1]{(V^\perp)_{n-t}} &\leq \biggl(\frac{2 e \ln (2) c}{t}\biggr)^t.
\end{align*}
\end{corollary}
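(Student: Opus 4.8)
The plan is to obtain \cref{kklcor} as a direct consequence of the KKL inequality in the form of \cref{kkl}, applied to $A = V$, combined with the explicit Fourier transform \eqref{ft} of the characteristic function of a linear subspace. First I would instantiate \cref{kkl} with $A = V$ and $f$ the characteristic function of $V$. Since $\abs[0]{V} = 2^{\dim_{\FF_2}(V)}$, the quantity $c = n - \log\abs[0]{A}$ occurring in \cref{kkl} equals $n - \dim_{\FF_2}(V)$, which is exactly the $c$ defined in the corollary. Thus, for every integer $1 \leq t \leq \ln(2)c$, \cref{kkl} yields
\[
\sum_{\abs[0]{z} = t} \widehat{f}(z)^2 \leq \frac{1}{2^{2c}}\biggl(\frac{2e\ln(2)c}{t}\biggr)^t
\qquad\text{and}\qquad
\sum_{\abs[0]{z} = n-t} \widehat{f}(z)^2 \leq \frac{1}{2^{2c}}\biggl(\frac{2e\ln(2)c}{t}\biggr)^t.
\]

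Next I would evaluate the left-hand sides using \eqref{ft}. Because $V$ is a subspace, $\widehat{f}(z) = [z \in V^\perp]/\abs[0]{V^\perp}$, so $\widehat{f}(z)^2 = [z \in V^\perp]/\abs[0]{V^\perp}^2$, and therefore $\sum_{\abs[0]{z}=t}\widehat{f}(z)^2 = \abs[0]{(V^\perp)_t}/\abs[0]{V^\perp}^2$, and likewise with $t$ replaced by $n-t$. The arithmetic identity that makes everything fit is $\abs[0]{V^\perp} = 2^{\,n-\dim_{\FF_2}(V)} = 2^c$, hence $\abs[0]{V^\perp}^2 = 2^{2c}$. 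Substituting this and cancelling the common factor $2^{2c}$ from both sides of each inequality gives precisely
\[
\Vperpt \leq \biggl(\frac{2e\ln(2)c}{t}\biggr)^t
\qquad\text{and}\qquad
\abs[1]{(V^\perp)_{n-t}} \leq \biggl(\frac{2e\ln(2)c}{t}\biggr)^t,
\]
which is the claim.

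In this case there is no real obstacle: the corollary is just a reformulation of \cref{kkl} in terms of the weight distribution of the dual subspace $V^\perp$. The only two small points to get right are (i) checking that the two notions of $c$ --- the entropy-type quantity $n-\log\abs[0]{V}$ in the lemma and the codimension $n-\dim_{\FF_2}(V)$ in the corollary --- genuinely coincide, and (ii) verifying that the normalizing power $2^{2c} = \abs[0]{V^\perp}^2$ cancels exactly, so that the clean bound $(2e\ln(2)c/t)^t$ emerges without stray constants.
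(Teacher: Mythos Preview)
Your proposal is correct and follows essentially the same approach as the paper: apply \cref{kkl} to the characteristic function $f$ of $V$, use \eqref{ft} to identify $\sum_{\abs[0]{z}=t}\widehat f(z)^2 = \Vperpt/\abs[0]{V^\perp}^2$, and cancel the factor $\abs[0]{V^\perp}^2 = 2^{2c}$. The paper's proof is just a one-line compression of exactly these steps.
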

\begin{proof} Let $f$ be the indicator function of $V$. Then, using \eqref{ft} and \cref{kkl} we get
\[
\Vperpt =  |V^\perp|^2\sum_{|z|=t}\biggl(\frac{[z\in V^\perp]}{|V^\perp|}\biggr)^2 =  2^{2c} \sum_{|z|=t} \widehat{f}(z)^2 \leq \biggl(\frac{2 e \ln (2) c}{t}\biggr)^t
\]
and the same for $\abs[1]{(V^\perp)_{n-t}}$.
\end{proof}
\begin{example} As mentioned in \cite{MR2847893} the following example shows that \cref{kklcor} is almost tight. Let $V\subseteq \{0,1\}^n$ be the $d$-dimensional subspace consisting of all bit strings that begin with $n-d$ zeros. Then $V^\perp$ is the space of bit strings that end with $d$ zeros. Let $c = n - \dim(V) = n-d$. Then we can directly compute the lower bound
\[
\biggl(\frac{c}{t}\biggr)^t \leq \binom{c}{t} = \binom{n-d}{t} = \abs[1]{(V^{\perp})_t}
\]
while \cref{kklcor} gives for $1\leq t \leq \ln(2)c$ that
\[
\abs[1]{(V^{\perp})_t} \leq \biggl(\frac{2 e \ln (2) c}{t}\biggr)^t.
\]

\end{example}

\subsubsection{Bounds involving binomial coefficients}

\begin{lemma}\label{bounds} %
Let $n$ be even. If $0\leq m\leq n/3$, then
\[
\frac{\binom{n/2}{m}}{\binom{n+1}{2m+1}} \leq 2\biggl(\frac{2m+1}{2(n-m+1)}\biggr)^{m+1}.
\]
If $1\leq m\leq (n+1)/3$, then
\[
\frac{\binom{n/2}{m}}{\binom{n+1}{2m}} \leq \biggl(\frac{m}{n-m+1}\biggr)^{m}.
\]
\end{lemma}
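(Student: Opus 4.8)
The plan is to prove both inequalities by the same elementary route: write each ratio of binomial coefficients as an explicit product of linear fractions and then bound that product termwise. For the first inequality, expand
\[
\frac{\binom{n/2}{m}}{\binom{n+1}{2m+1}} = \frac{(n/2)!\,(2m+1)!\,(n-2m)!}{m!\,(n/2-m)!\,(n+1)!}.
\]
Grouping factors, one separates this into a product of $m+1$ ratios of the shape $\frac{(\text{small})}{(\text{large})}$; the point is that each such ratio is bounded by $\frac{2m+1}{2(n-m+1)}$ up to one overall factor of $2$ that absorbs the ``$+1$'' discrepancies in the indices. Concretely, I would pair the $m$ factors of $(n/2)!/(n/2-m)! = \prod_{i=0}^{m-1}(n/2-i)$ against $m$ of the factors in the denominator coming from $(n+1)!/(n-2m)!$, and the remaining factor $(2m+1)!/m! = \prod_{j=m+1}^{2m+1} j$ against the rest, and check that the monotonicity of $j\mapsto j$ versus $n-j$ makes every quotient at most $\frac{2m+1}{2(n-m+1)}$ once $m\le n/3$ (so that $n-m+1$ is genuinely large compared to $2m+1$). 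The exponent $m+1$ is exactly the number of quotients produced by this bookkeeping, and the factor $2$ handles the boundary term.

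The second inequality is handled identically but is slightly cleaner: here
\[
\frac{\binom{n/2}{m}}{\binom{n+1}{2m}} = \frac{(n/2)!\,(2m)!\,(n+1-2m)!}{m!\,(n/2-m)!\,(n+1)!},
\]
which splits as a product of exactly $m$ quotients, each of the form $\frac{(n/2-i)(2m-j)\cdots}{(n+1-\ell)\cdots}$; I would match $\prod_{i=0}^{m-1}(n/2-i)$ with half of $\prod(n+1-i)$, and $\prod_{j=0}^{m-1}(2m-j)/m! = \binom{2m}{m}\cdot m!/(2m)!\cdots$—more simply, note $(2m)!/m! = \prod_{j=1}^{m}(m+j)$—and check each of the $m$ resulting quotients is at most $\frac{m}{n-m+1}$ under the hypothesis $m\le (n+1)/3$. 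No stray factor of $2$ is needed here because the index arithmetic works out evenly.

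The only real obstacle is the combinatorial bookkeeping: choosing the pairing of numerator and denominator factors so that \emph{every} individual quotient, not merely the product, is bounded by the target linear fraction, and verifying that the two hypotheses $m\le n/3$ (resp.\ $m\le (n+1)/3$) are exactly what is needed for this termwise domination. I would first verify the worst case $m = \lfloor n/3\rfloor$ to confirm the constants are tight enough, then write the general argument by induction on $m$ (passing from $m$ to $m+1$ multiplies both sides by one more quotient and one more copy of the target fraction, and the inequality between these is again a routine comparison of linear terms). Everything after the setup is a direct computation.
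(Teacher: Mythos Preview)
Your approach is the paper's approach: expand as factorials, split into the two products
\[
\frac{(n/2)\cdots(n/2-m+1)}{(n+1)\cdots(n-m+2)}\cdot\frac{(2m+1)\cdots(m+1)}{(n-m+1)\cdots(n-2m+1)}
\]
(respectively with $(2m)\cdots(m+1)$ over $(n-m+1)\cdots(n-2m+2)$), and bound termwise. One correction, though: you cannot bound \emph{every} quotient by $\frac{2m+1}{2(n-m+1)}$ as you suggest, since for small $m$ the first-group quotients like $\frac{n/2}{n+1}\approx\tfrac12$ vastly exceed that target. The paper instead bounds each of the first $m$ quotients by $\tfrac12$ (clear, since $n/2-i\le\tfrac12(n+1-i)$) and each of the last $m+1$ (resp.\ $m$) quotients by $\frac{2m+1}{n-m+1}$ (resp.\ $\frac{2m}{n-m+1}$), which is where the hypothesis $m\le n/3$ is used; then $(\tfrac12)^m\bigl(\frac{2m+1}{n-m+1}\bigr)^{m+1}$ is exactly $2\bigl(\frac{2m+1}{2(n-m+1)}\bigr)^{m+1}$, and likewise $(\tfrac12)^m\bigl(\frac{2m}{n-m+1}\bigr)^m=\bigl(\frac{m}{n-m+1}\bigr)^m$. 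No induction is needed.
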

\begin{proof}
We expand the binomial coefficients as fractions of factorials:
\begin{align*}
\frac{\binom{n/2}{m}}{\binom{n+1}{2m+1}} &= \frac{(n/2)! (2m+1)! (n-2m)!}{m! (n/2-m)! (n+1)!} \\
&= \frac{(n/2)\cdots (n/2-m+1)}{(n+1)\cdots (n-m+2)} \cdot \frac{(2m+1)\cdots (m+1)}{(n-m+1)\cdots(n-2m+1)}\\
&\leq 2\biggl(\frac{2m+1}{2(n-m+1)}\biggr)^{m+1}
\end{align*}
where in the last inequality we upper bounded each of the first $m$ terms by~$1/2$ and each of the last $m+1$ terms by $(2m+1)/(n-m+1)$ using the assumption $m\leq n/3$. We do the same for the other inequality:
\begin{align*}
\frac{\binom{n/2}{m}}{\binom{n+1}{2m}} &= \frac{(n/2)! (2m)! (n+1-2m)!}{m! (n/2-m)! (n+1)!} \\
&= \frac{(n/2)\cdots (n/2-m+1)}{(n+1)\cdots (n-m+2)} \cdot \frac{(2m)\cdots (m+1)}{(n-m+1)\cdots(n-2m+2)}\\
&\leq \biggl(\frac{m}{n-m+1}\biggr)^{m}
\end{align*}
where in the last inequality we upper bounded each of the first $m$ terms by~$1/2$ and each of the last $m$ terms by $(2m)/(n-m+1)$ using the assumption $1\leq m\leq (n+1)/3$.
\end{proof}

\subsection{Proof of \cref{largelargethm}}

\begin{proof}[\upshape\bfseries Proof of \cref{largelargethm}] %
Let $n\geq 59$ be odd. Let $V\subseteq \{0,1\}^n$ be a subspace of dimension at least $11n/12$. We will prove that
\begin{equation}\label{eqtoprove}
\abs[1]{\bigl\{ (x,y) \in (\{0,1\}^n)^{\times 2} : |x| = |y| = \tfrac{n-1}{2}, x + y \in V\bigr\}} \leq \binom{n}{\frac{n-1}{2}}^{1 + \frac{\dim_{\FF_2}(V)}{n-1}}.
\end{equation}
This proves the theorem. To see this, in the theorem statement, set $k = n+1$, ignore the $(n+1)$th coordinate, and note that the size of $\bigl\{ (x,y) \in (\{0,1\}^n)^{\times 2} : |x| = |y| = \tfrac{n-1}{2}, x + y \in V\bigr\}$ equals the size of $\bigl\{ (x,y) \in (\{0,1\}^n)^{\times 2} : |x| = |y| = \tfrac{n+1}{2}, x + y \in V\bigr\}$ via the bijection that flips the bits of $x$ and $y$.

Let $f : \{0,1\}^n \to \{0,1\}$ be the characteristic function of $V$, that is, $f(x) = [x\in V]$. Recall that we defined the function $w_k^n : \{0,1\}^n \to \{0,1\}$ by $w^n_k(x) = [|x|=k]$. Using \eqref{conv} the left-hand side of \eqref{eqtoprove} can be rewritten as
\begin{align*}
&\abs[1]{\bigl\{ (x,y) \in (\{0,1\}^n)^{\times 2} : |x| = |y| = \tfrac{n-1}{2}, x + y \in V\bigr\}}\\
&= \sum_{x,y} w^n_{\frac{n-1}{2}}(x) w^n_{\frac{n-1}{2}}(y) f(x + y)\\
&= 2^{2n} \sum_{z} \widehat{w}^n_{\frac{n-1}{2}}(z)^2 \widehat{f}(z)
\end{align*}
with sums over $x,y\in \{0,1\}^n$ and $z \in \{0,1\}^n$. Since $\widehat{w}^n_k(z)=\frac{1}{2^n}K^n_k(|z|)$ (see~\eqref{what}) and $\widehat{f}(z) = \frac{1}{2^n} |V| \cdot [z \in V^\perp]$ (see \eqref{ft}) we have
\begin{align}\label{expr1}
 2^{2n} \sum_{z} \widehat{w}^n_{\frac{n-1}{2}}(z)^2 \widehat{f}(z) = \frac{|V|}{2^n} \sum_{z} K^n_{\frac{n-1}{2}}(|z|)^2 \, [z \in V^\perp].
\end{align}
Recall that $(V^\perp)_t$ denotes the subset of $V^\perp$ consisting of vectors with Hamming weight $t$.
We rewrite the right-hand side of \eqref{expr1} as a sum over the Hamming weight $t=|z| \in \{0, \ldots, n\}$. 
\begin{align}
&\frac{|V|}{2^n} \sum_{z} K^n_{\frac{n-1}{2}}(|z|)^2 \, [z \in V^\perp] = \frac{|V|}{2^n} \sum_{t} K^n_{\frac{n-1}{2}}(t)^2  \,\Vperpt. \label{eqc}%
\end{align}
By \cref{kraw} we have 
\[
K^n_{\frac{n-1}{2}}(t)^2 = 
\binom{n}{(n-1)/2}^2 \frac{\displaystyle\binom{(n-1)/2}{\floor{t/2}}^2}{\displaystyle \binom{n}{t}^2}
\]
which we use to rewrite \eqref{eqc} as
\begin{align}
&\frac{|V|}{2^n} \biggl( \sum_{\mathclap{\textnormal{$t$}}} K^n_{\frac{n-1}{2}}(t)^2 \,\Vperpt \biggr)\nonumber\\ %
&= \frac{|V|}{2^n} \binom{n}{\frac{n-1}{2}}^2\biggl( \sum_{\mathclap{\textnormal{$t$}}}\frac{\binom{(n-1)/2}{\floor{t/2}}^2}{ \binom{n}{t}^2} \,\Vperpt \biggr)\nonumber\\ %
&= \frac{|V|}{2^n} \binom{n}{\frac{n-1}{2}}^2\biggl( 1 + [1^n \in V^{\perp}] +  \sum_{\substack{1\leq t\leq n-1\\}} \frac{\binom{(n-1)/2}{\floor{t/2}}^2}{ \binom{n}{t}^2} \,\Vperpt \biggr). \label{eqtobound} %
\end{align}
We assumed that $\dim(V) \geq 11n/12$. Since the statement of the theorem is directly verified to be true when $\dim(V) = n-1$ %
we may in addition assume that $\dim(V) < n-1$. We define~$c = n - \dim(V)$. Then $2\leq c \leq n/12$.
Let
\[
f(n,c) \coloneqq \fnc%
\]
In \cref{lem1} and \cref{lem2} below we will prove the inequalities
\begin{gather}
\sum_{1\leq t\leq n-1} \frac{\binom{(n-1)/2}{\floor{t/2}}^2}{ \binom{n}{t}^2} \,\Vperpt  \leq f(n,c)\\
2+f(n,c) \leq 2^{c} \binom{n}{\frac{n-1}{2}}^{\frac{1-c}{n-1}}.
\end{gather}
These inequalities show that \eqref{eqtobound} is upper bounded as follows:
\begin{align*}
&\frac{|V|}{2^n} \binom{n}{\frac{n-1}{2}}^2\biggl( 1 + [1^n \in V^{\perp}] +  \sum_{\substack{1\leq t\leq n-1\\}} \frac{\binom{(n-1)/2}{\floor{t/2}}^2}{ \binom{n}{t}^2} \,\Vperpt \biggr)\\
&\leq\frac{|V|}{2^n} \binom{n}{\frac{n-1}{2}}^2 \bigl( 2 + f(n,c) \bigr)\\
&\leq\frac{|V|}{2^n} \binom{n}{\frac{n-1}{2}}^2 2^c \binom{n}{\frac{n-1}{2}}^{\frac{1-c}{n-1}}\\
&=\binom{n}{\frac{n-1}{2}}^{1 + \frac{\dim(V)}{n-1}}
\end{align*}
which proves the theorem.
\end{proof}

\begin{lemma}\label{lem1}
Let $n$ be odd.
For $2 \leq c \leq n/12$  such that $\dim(V) = n-c$ we~have
\[
\sum_{1\leq t\leq n-1} \frac{\binom{(n-1)/2}{\floor{t/2}}^2}{ \binom{n}{t}^2} \,\Vperpt \leq f(n,c).
\]
with
\[
f(n,c) \coloneqq \fnc. %
\]
\end{lemma}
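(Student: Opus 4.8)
The plan is to estimate the sum termwise after identifying its shape. Set $g(t)\coloneqq\binom{(n-1)/2}{\floor{t/2}}/\binom{n}{t}$, so that by \cref{kraw} we have $K^n_{(n-1)/2}(t)^2/\binom{n}{(n-1)/2}^2=g(t)^2$ and the quantity to be bounded is $S\coloneqq\sum_{1\le t\le n-1}g(t)^2\,\Vperpt$. Two structural facts organize the argument. First, $g(t)=\pm g(n-t)$ (as for the Krawchouk polynomials themselves), and \cref{kklcor} bounds $\Vperpt$ and $\abs[1]{(V^\perp)_{n-t}}$ by the same quantity; so it suffices to control the low-weight range $1\le t\le\ln(2)c$ and the bulk $\ln(2)c<t<n-\ln(2)c$, doubling the low-weight contribution to cover high weights. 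Second, the elementary ratios $g(2m)/g(2m+1)=(n-2m)/(2m+1)$ and $g(2m+1)=g(2m+2)$ show that $g(t)^2$ is non-increasing for $0\le t\le(n-1)/2$, hence unimodal on $[0,n]$ with peaks at the two ends.

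For the low-weight range I would use \cref{kklcor} in the form $\Vperpt\le(2e\ln(2)c/t)^t$ and \cref{bounds}, applied with the even integer $n-1$ in place of its ``$n$'', giving $g(2m)^2\le(m/(n-m))^{2m}$ and $g(2m+1)^2\le 4\bigl((2m+1)/(2(n-m))\bigr)^{2m+2}$. Multiplying these, the $t$-th summand is at most a constant times $\bigl(e\ln(2)c/(n-\floor{t/2})\bigr)^{\Theta(t)}$; since $c\le n/12$ forces $e\ln(2)c/n\le e\ln 2/12<1$, this decays geometrically in $t$, so the whole low-weight sum is governed by its $t=1$ and $t=2$ terms, namely $2e\ln(2)c/n^2$ and $(e\ln(2)c)^2/n^2$. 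After tracking constants and doubling for the high-weight range, this part contributes at most $16c^2/n^2$, the first term of $f(n,c)$.

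For the bulk I would use the trivial bound $\Vperpt\le\abs[0]{V^\perp}=2^c$ (here $\dim V^\perp=c$) together with the monotonicity of $g(t)^2$. Writing $t^\ast$ for the least integer exceeding $\ln(2)c$, the bulk contribution is at most $2\cdot 2^c\sum_{t^\ast\le t\le(n-1)/2}g(t)^2$, and because the successive ratios $g(t)^2/g(t+2)^2$ stay comfortably above $1$ from $t^\ast$ up to roughly $(n-1)/2$, this sum is at most a small absolute constant times $g(t^\ast)^2$. The heart of the matter is then $2^c g(t^\ast)^2\le(e\ln(2)c/n)^{\ln(2)c}$: by \cref{bounds} one has $g(t^\ast)^2\le 4\bigl(t^\ast/(2(n-t^\ast))\bigr)^{t^\ast}$, and inserting $2^c=e^{\ln(2)c}\le e^{t^\ast}$ and $n-t^\ast\ge n/2$ makes the left side at most a constant times $(et^\ast/n)^{t^\ast}$, which is $\le(e\ln(2)c/n)^{\ln(2)c}$ since $x\mapsto(ex/n)^x$ is decreasing on $(0,n/e^2)$ and $\ln(2)c\le t^\ast<n/e^2$; moreover this last step actually gains a factor $e^{-\Theta(\ln(2)c)}$. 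So the bulk is bounded by $(e\ln(2)c/n)^{\ln(2)c}$, the second term of $f(n,c)$, once $\ln(2)c$ is large enough for that extra factor to absorb the accumulated constants (the doubling from symmetry, the geometric-series constant, the $4$ from the odd case of \cref{bounds}); a short computation shows this holds above a small absolute threshold on $c$.

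For the finitely many smaller values of $c$ — where in any case $\ln(2)c$ is too small for \cref{kklcor} to help — I would bound $S$ directly: $\Vperpt\le 2^c$ for every $t\ge 1$, and $\sum_{1\le t\le n-1}g(t)^2=2\sum_{1\le t\le(n-1)/2}g(t)^2\le 5/n^2$ since $g(1)^2=g(2)^2=1/n^2$ and the remaining terms decay rapidly (using $n\ge 59$); hence $S\le 5\cdot 2^c/n^2\le 16c^2/n^2$ because $2^c\le 3c^2$ in that range. Combining the two regimes gives $S\le f(n,c)$. I expect the bulk estimate to be the main obstacle: one must play the exponential growth $2^c$ against the decay of $g(t)^2$, which only succeeds with room to spare once $c$ is moderately large, and the attendant constant bookkeeping — together with the symmetry and geometric-series factors and the parity of $t^\ast$ in \cref{bounds} — is exactly what forces the separate elementary treatment of small $c$.
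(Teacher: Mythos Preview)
Your low-weight estimate is essentially the paper's: both use the symmetry $g(t)^2=g(n-t)^2$, apply \cref{kklcor}, split by parity, and invoke \cref{bounds} to land on $16c^2/n^2$.

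The bulk estimate is where you diverge. The paper does something cleaner than your geometric-decay argument: it uses the Vandermonde-type inequality
\[
\binom{(n-1)/2}{\floor{t/2}}^{2}\le\binom{n}{t},
\]
which gives $g(t)^2\le 1/\binom{n}{t}$ outright. Then, rather than bounding each $\Vperpt$ by $2^c$ and summing $g(t)^2$, the paper bounds $1/\binom{n}{t}$ uniformly on the bulk by its value at the edge $t=\floor{\ln(2)c}$ and sums $\Vperpt$ over \emph{all} $t$ to get $|V^\perp|=2^c$:
\[
\sum_{\text{bulk}}g(t)^2\,\Vperpt\le\frac{1}{\binom{n}{\floor{\ln(2)c}}}\sum_t\Vperpt\le\frac{2^c}{\binom{n}{\floor{\ln(2)c}}}\le 2^c\Bigl(\frac{\ln(2)c}{n}\Bigr)^{\ln(2)c}=\Bigl(\frac{e\ln(2)c}{n}\Bigr)^{\ln(2)c}.
\]
No geometric series, no constants to absorb, no separate small-$c$ case, and no appeal to $n\ge 59$ (which, incidentally, is not a hypothesis of this lemma). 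Your route does work, but two of its steps deserve more care than you give them: the claim that $\sum_{t^\ast\le t\le(n-1)/2}g(t)^2=O(g(t^\ast)^2)$ is not a single geometric series, since the ratio $g(t)^2/g(t+2)^2$ tends to $1$ near $t\approx n/2$ --- you need to split once more and observe that the tail near the middle is exponentially small in $n$; and the ``gain of $e^{-\Theta(\ln(2)c)}$'' comes specifically from the factor $2$ in the denominator in \cref{bounds}, so the constant absorption should be tracked explicitly to see at which $c$ the bulk argument takes over from your direct small-$c$ bound.
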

\begin{proof}
We first upper bound the sum over $t \in [1, \floor{\ln(2)c}] \cup [n - \floor{\ln(2)c}, n-1]$ and afterwards the sum over the remaining $t$'s.
We use $\binom{(n-1)/2}{\floor{t/2}} = \binom{(n-1)/2}{\floor{(n-t)/2}}$ and then apply \cref{kklcor} to get
\begin{align}
&\sum_{t=1}^{\floor{\ln(2)c}} \frac{\binom{(n-1)/2}{\floor{t/2}}^2}{ \binom{n}{t}^2} \,\Vperpt  \,+ \sum_{t= n-\floor{\ln(2)c}}^{n-1} \frac{\binom{(n-1)/2}{\floor{t/2}}^2}{ \binom{n}{t}^2} \,\Vperpt\nonumber \\
&= \sum_{t=1}^{\floor{\ln(2)c}} \frac{\binom{(n-1)/2}{\floor{t/2}}^2}{ \binom{n}{t}^2} \Bigl(\,\Vperpt + \abs[1]{(V^\perp)_{n-t}}\Bigr)\nonumber\\
&\leq 2 \sum_{t=1}^{\floor{\ln(2)c}} \frac{\binom{(n-1)/2}{\floor{t/2}}^2}{ \binom{n}{t}^2} \biggl( \frac{2 e \ln(2) c}{t} \biggr)^t\label{eqeq}.
\end{align}
We upper bound the sum over even $t$ and the sum over odd $t$ separately.
For the even part we use $\floor{\ln(2)c}\leq c$, then use \cref{bounds} %
 and replace $t$ by $2t$ to get
\begin{align}
\sum_{\substack{t=1\\[0.5ex]\textnormal{even}}}^{\floor{\ln(2)c}} \frac{\binom{(n-1)/2}{t/2}^2}{ \binom{n}{t}^2}  \biggl(\frac{2 e \ln (2) c}{t}\biggr)^t 
&\leq \sum_{\substack{t=1\\[0.5ex]\textnormal{even}}}^{c} \frac{\binom{(n-1)/2}{t/2}^2}{ \binom{n}{t}^2}  \biggl(\frac{2 e \ln (2) c}{t}\biggr)^t \label{eqstar2}\\
&\leq \sum_{\substack{t=1\\[0.5ex]\textnormal{even}}}^{c} \biggl( \frac{t}{2n-t} \biggr)^t  \biggl(\frac{2 e \ln (2) c}{t}\biggr)^t\nonumber\\
&= \sum_{\substack{t=1\\[0.5ex]\textnormal{even}}}^{c} \biggl(\frac{2 e \ln (2) c}{2n-t}\biggr)^t\nonumber\\
&= \sum_{t=1}^{c/2} \biggl(\frac{e \ln (2) c}{n-t}\biggr)^{2t}\nonumber.
\end{align}
We upper bound the sum %
as follows, using $t\leq c/2$ and $c\leq n/12$:
\begin{align}
&\sum_{t=1}^{c/2} \biggl(\frac{(e \ln 2) c}{n-t}\biggr)^{2t} \leq \sum_{t=1}^{c/2} \biggl(\frac{(e \ln 2) c}{n-c/2}\biggr)^{2t} = \frac{4c^2(e \ln 2)^2 \bigl(1 - \bigl(\frac{ce \ln 4}{2n -c} \bigr)^c \bigr)}{4n^2-4cn-c^2(4(e \ln2)^2-1)}\nonumber\\
&\leq \frac{4c^2(e \ln 2)^2}{4n^2-4cn-4c^2(e \ln2)^2} \leq \frac{4c^2}{n^2} \biggl( \frac{(e \ln 2)^2}{4 - 1/3-(e \ln 2)^2/36} \biggr) \leq \frac{4c^2}{n^2}  \label{eqstar1}.
\end{align}
For the odd part we shift $t$ by 1 and use $\floor{\ln(2)c}\leq c-1$, then use \cref{bounds} %
to get
\begin{align}
\sum_{\substack{t=1\\[0.5ex]\textnormal{odd}}}^{\floor{\ln(2)c}} \frac{\binom{(n-1)/2}{(t-1)/2}^2}{ \binom{n}{t}^2}  \biggl(\frac{2 e \ln (2) c}{t}\biggr)^t
&= \sum_{\substack{t=1\\[0.5ex]\textnormal{even}}}^{c} \frac{\binom{(n-1)/2}{t/2-1}^2}{ \binom{n}{t-1}^2}  \biggl(\frac{2 e \ln (2) c}{t-1}\biggr)^{t-1} \nonumber\\
&\leq \sum_{\substack{t=1\\[0.5ex]\textnormal{even}}}^{c} 4\biggl(\frac{t-1}{2n-t} \biggr)^t   \biggl(\frac{2 e \ln (2) c}{t-1}\biggr)^{t} \biggl(\frac{t-1}{2 e \ln (2) c}\biggr) \nonumber
\end{align}
Next we use $t \leq \ln(2)c +1$ and $4\leq 2e$ and we replace~$t$ by $2t$ to get
\begin{align*}
\sum_{\substack{t=1\\[0.5ex]\textnormal{even}}}^{c} 4\biggl(\frac{t-1}{2n-t} \biggr)^t   \biggl(\frac{2 e \ln (2) c}{t-1}\biggr)^{t} \biggl(\frac{t-1}{2 e \ln (2) c}\biggr) &\leq
\sum_{\substack{t=1\\[0.5ex]\textnormal{even}}}^{c} \biggl(\frac{t-1}{2n-t} \biggr)^t   \biggl(\frac{2 e \ln (2) c}{t-1}\biggr)^{t}\\
&= \sum_{t=1}^{c/2} \biggl(\frac{e \ln (2) c}{n-t}\biggr)^{2t}
\end{align*}
which again we upper bound with \eqref{eqstar1}.  We conclude that \eqref{eqeq} is upper bounded by $16c^2/n^2$.

To upper bound the sum over the remaining $t$'s we use the inequalities
\[
\binom{\frac{n-1}{2}}{\floor{\frac{t}{2}}}^2 \leq \binom{n}{t},\qquad \biggl(\frac{n}{k}\biggr)^{k}\leq \binom{n}{k}.
\]
to get
\begin{align*}
&\sum_{t=\floor{\ln(2) c}+1}^{n-\floor{\ln(2)c}-1}  \frac{\binom{(n-1)/2}{\floor{t/2}}^2}{ \binom{n}{t}^2}  \,\Vperpt
\leq \sum_{t=\floor{\ln(2) c}+1}^{n-\floor{\ln(2)c}-1}  \frac{\abs[1]{(V^\perp)_t}}{ \binom{n}{t}} \\
&\leq \frac{1}{ \binom{n}{\floor{\ln(2)c}}} \sum_{t=\floor{\ln(2) c}}^{n-\floor{\ln(2)c}-1}   \,\Vperpt \leq \frac{\abs[1]{V^\perp}}{ \binom{n}{\floor{\ln(2)c}}}\\
& \leq 2^c \biggl(\frac{\ln(2) c}{n}\biggr)^{\ln(2)c} = \biggl(\frac{e \ln(2) c}{n}\biggr)^{\ln(2)c}.
\end{align*}
This finishes the proof.
\end{proof}

\begin{lemma}\label{lem2} For $n\geq 59$ odd and $2\leq c \leq n/12$ we have
\[
2+f(n,c) \leq 2^{c} \binom{n}{\frac{n-1}{2}}^{\frac{1-c}{n-1}}.
\]
with
\[
f(n,c) \coloneqq \fnc. %
\]
\end{lemma}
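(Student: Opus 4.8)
The plan is to turn this into a purely elementary one-variable inequality and then prove that by a short split on $c$. The key point is that on the right-hand side the binomial coefficient $\binom{n}{(n-1)/2}$ appears with the \emph{negative} exponent $\tfrac{1-c}{n-1}$ (here $c\geq2$), so an \emph{upper} bound on this binomial coefficient yields a lower bound on the right-hand side. First I would record the Stirling-type estimate $\binom{n}{(n-1)/2}\leq 2^{n}\sqrt{2/(\pi(n-1))}$, which follows from $\binom{2m}{m}\leq 4^{m}/\sqrt{\pi m}$ (e.g.\ via the Robbins bounds already used above) together with $\binom{2m+1}{m}=\tfrac{2m+1}{m+1}\binom{2m}{m}\leq 2\binom{2m}{m}$. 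Substituting this, and using the identity $c+\tfrac{n(1-c)}{n-1}=\tfrac{n-c}{n-1}=1-\tfrac{c-1}{n-1}$ to collect the powers of $2$, one gets
\[
2^{c}\binom{n}{\tfrac{n-1}{2}}^{\!\frac{1-c}{n-1}}\geq 2\Bigl(\frac{\pi(n-1)}{8}\Bigr)^{\!\frac{c-1}{2(n-1)}}.
\]
Since $\tfrac{\pi(n-1)}{8}>1$ for $n\geq59$, the bound $e^{x}\geq1+x$ shows the right-hand side is at least $2+\tfrac{c-1}{n-1}\ln\tfrac{\pi(n-1)}{8}$, so it suffices to prove the elementary inequality
\[
\frac{c-1}{n-1}\,\ln\frac{\pi(n-1)}{8}\geq f(n,c)\qquad(\star)
\]
for odd $n\geq59$ and $2\leq c\leq n/12$. (Note that the crude bound $\binom{n}{(n-1)/2}\leq 2^{n-1}$ would yield only ``$\geq2$'' here and leave no room for $f(n,c)$, so the $\sqrt{n}$ gain really is needed.)

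To prove $(\star)$ I would split on the size of $c$. For $c\geq3$ one has $\ln(2)c\geq2$ and $\tfrac{e\ln(2)c}{n}\leq\tfrac{e\ln2}{12}<1$ (using $c\leq n/12$), so the second summand of $f(n,c)$ is at most $\bigl(\tfrac{e\ln(2)c}{n}\bigr)^{2}=\tfrac{(e\ln2)^{2}c^{2}}{n^{2}}<\tfrac{4c^{2}}{n^{2}}$, whence $f(n,c)<\tfrac{20c^{2}}{n^{2}}$; and since $c-1\geq\tfrac{2c}{3}$ for $c\geq3$, the left-hand side of $(\star)$ is at least $\tfrac{2c}{3n}\ln\tfrac{\pi(n-1)}{8}$, so $(\star)$ follows from $\ln\tfrac{\pi(n-1)}{8}\geq\tfrac{30c}{n}$, which holds because $\tfrac{30c}{n}\leq\tfrac{30}{12}=2.5<\ln\tfrac{58\pi}{8}\leq\ln\tfrac{\pi(n-1)}{8}$. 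For the base case $c=2$ I would multiply $(\star)$ through by $n$: the left-hand side becomes $\tfrac{n}{n-1}\ln\tfrac{\pi(n-1)}{8}\geq\ln\tfrac{58\pi}{8}>3$, while the right-hand side becomes $\tfrac{64}{n}+(2e\ln2)^{2\ln2}\,n^{-(2\ln2-1)}$, which (as $2\ln2-1>0$) decreases in $n$ and hence is at most its value at $n=59$, which is below $1.1+1.31=2.41<3$. This establishes $(\star)$ and therefore the lemma.

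Conceptually there is nothing deep here; the work is in the bookkeeping. The two places that need care are (i) choosing the Stirling bound sharply enough — in the $c=2$ case the margin in $(\star)$ is only about a factor $\tfrac43$, so the trivial central-binomial estimate is insufficient — and (ii) tracking the numerical constants ($e\ln2$, $(e\ln2)^{2}$, $2\ln2-1$, $e^{5/2}$) closely enough that the stated threshold $n\geq59$ (i.e.\ $k\geq60$) suffices rather than some larger one. I expect step (ii), the constant-chasing in the $c=2$ and $c\geq3$ subcases, to be the main — though routine — effort.
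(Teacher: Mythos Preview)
Your argument is correct. Both you and the paper begin the same way: use an upper bound on $\binom{n}{(n-1)/2}$ (you use $2^{n}\sqrt{2/(\pi(n-1))}$, the paper uses the cruder $2^{n}/\sqrt{n}$) to replace the right-hand side by $2\cdot A^{(c-1)/(n-1)}$ for some $A>1$, and then verify the resulting elementary inequality. The difference is in that last step. The paper checks the base case $c=2$ numerically and then proves that $c\mapsto 2(\sqrt{n}/2)^{(c-1)/(n-1)}-(2+f(n,c))$ is increasing on $[2,n/12]$ by computing its derivative and showing each piece has the right sign. You instead linearize via $e^{x}\geq 1+x$, reducing to the additive inequality $(\star)$, and then dispatch $c=2$ and $c\geq 3$ separately with direct estimates. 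Your route is derivative-free and a touch sharper in the reduction (the $\pi$ in your constant gives slightly more room at $c=2$), while the paper's monotonicity argument handles all $c\geq 2$ in one stroke once the anchor $c=2$ is checked. Either way the content is the same bookkeeping; your remark that the trivial bound $\binom{n}{(n-1)/2}\leq 2^{n-1}$ is not enough, and that the $\sqrt{n}$ saving is essential, is exactly the point.
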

\begin{proof}
For odd $n$ we have $2^{n}/\sqrt{n} \geq \binom{n}{(n-1)/2}$ and thus
\[
2^{1+\frac{1-c}{n-1}}\sqrt{n}^{\frac{c-1}{n-1}} = 2^{c} \biggl( \frac{2^{n}}{\sqrt{n}} \biggr)^{\frac{1-c}{n-1}} \leq 2^{c} \binom{n}{\frac{n-1}{2}}^{\frac{1-c}{n-1}}.
\]
It is thus sufficient to show that for $n\geq 59$ and $2 \leq c \leq n/12$ we have $2 + f(n,c) \leq 2(\sqrt{n}/2)^{\frac{c-1}{n-1}}$.
One verifies that $2+f(n,2) \leq 2(\sqrt{n}/2)^{\frac{2-1}{n-1}}$ holds for every $n\geq 53$. We will show that for every $n\geq 59$ the function $f_n(c) = 2(\sqrt{n}/2)^{\frac{c-1}{n-1}} - (2 + f(n,c))$ is increasing in $c$ for $2\leq c \leq n/12$. %
We see that the derivative $\frac{\mathrm{d}}{\mathrm{d}c} f_n(c)$ equals
\[
\frac{\mathrm{d}}{\mathrm{d}c}f_n(c) = 2(\sqrt{n}/2)^{\frac{c-1}{n-1}} \frac{\ln(\sqrt{n}/2)}{n-1} - \frac{32c}{n^2} - g_n(c)
\]
with
\[
g_n(c) = %
\ln(2) \biggl(\frac{e \ln(2) c}{n}\biggr)^{\ln (2)c} \ln \Bigl(\frac{e^2\ln (2)c}{n}\Bigr).
\]
Using $c\leq n/12$ one can verify that $\ln (e^2\ln (2)c / n)\leq 0$ so that $g_n(c) \leq 0$. Moreover, using $c\leq n/12$, $n\geq 59$ and $(\sqrt{n}/2)^{\frac{c-1}{n-1}}\geq1$ one can verify that
\[
\frac{32c}{n^2} \leq \frac{32}{12n} = \frac{8/3}{n} \leq \frac{8/3}{n-1} \leq \frac{2\ln(\sqrt{n}/2)}{n-1}\leq \frac{2\ln(\sqrt{n}/2)}{n-1}(\sqrt{n}/2)^{\frac{c-1}{n-1}}.
\]
We conclude that $\frac{\mathrm{d}}{\mathrm{d}c} f_n(c) \geq 0$ which proves the lemma.
\end{proof}

\subsubsection*{\upshape\bfseries Acknowledgements}
SA is funded by the MIT--IBMWatson AI Lab under the project \emph{Machine Learning in Hilbert space}. This work was initiated when SA was a part of QuSoft, CWI and was supported by ERC Consolidator Grant QPROGRESS.
JZ thanks Florian Speelman, Pjotr Buys and Avi Wigderson for helpful discussions. This work was initiated when JZ was a part of QuSoft, CWI. This material is based upon work supported by the National Science Foundation under Grant No.~DMS-1638352 (JZ). %
This research was supported by the National Research, Development and Innovation Fund of Hungary within the Quantum Technology National Excellence Program (Project Nr.~2017-1.2.1-NKP-2017-00001) and via the research grants K124152, KH~129601 (PV). 

\newpage
\raggedright
\bibliographystyle{alphaurlpp}
\bibliography{all}
\end{document}